\newtheorem{theorem}{Theorem}
\newtheorem{lemma}{Lemma}
\newtheorem{proposition}{Proposition}
\newtheorem{corollary}{Corollary}
\newtheorem{remark}{Remark}
\date{}
\numberwithin{equation}{section}
\numberwithin{theorem}{section}
\numberwithin{lemma}{section}
\numberwithin{corollary}{section}
\numberwithin{remark}{section} 
\numberwithin{proposition}{section}
\numberwithin{definition}{section}
\newcommand{\dd}{\, \mathrm{d}}
\newcommand{\R}{\mathbb{R}}
\newcommand{\dist}{\operatorname{dist}}
\newcommand{\e}{\varepsilon}
\begin{document}
\title[Non-local diffusion with free boundaries]{Non-local diffusion with free boundaries}

\author[D. Marcon]{Diego Marcon}
    \address{Departamento de Matem\'{a}tica Pura e Aplicada, Universidade Federal do Rio Grande do Sul, Porto Alegre - RS, Brazil.}
\email{diego.marcon@ufrgs.br}

\author[R. Teymurazyan]{Rafayel Teymurazyan}
	\address{Applied Mathematics and Computational Sciences Program (AMCS), Computer, Electrical and Mathematical Sciences and Engineering Division (CEMSE), King Abdullah University of Science and Technology (KAUST), Thuwal, 23955-6900, Kingdom of Saudi Arabia}{}
	\email{rafayel.teymurazyan@kaust.edu.sa}

\begin{abstract}
We prove optimal regularity and derive several geometric properties for solutions of a free boundary problem with fractional diffusion. Additionally, we deduce local $C^{1,\alpha}$ regularity results for the corresponding interior and exterior free boundaries.

\bigskip

\noindent \textbf{MSC (2020):} 35R35, 35R11, 35A15, 49Q10.

\bigskip

\noindent \textbf{Keywords:} Minimization, fractional Laplacian, optimal regularity, free boundary problems.
\end{abstract}

\maketitle

\section{Introduction}
The study of optimization problems with free boundaries has advanced significantly, in part due to the seminal work of Alt and Caffarelli, \cite{AC81}. Many subsequent studies have followed, such as \cite{BMW06, OT06, ST24, T05, T10, TT15, TU17, Y16}, to name just a few. These problems can be categorized based on their local or non-local nature. In the local setting, checking whether an equation holds at a particular point requires knowing the function's values within an arbitrarily small neighborhood of that point. In contrast, the non-local setting requires global knowledge of the function's values. Therefore, when considering long-range interaction, non-local models are more accurate. In other words, unlike local models, which can feel changes only on the boundary of the substance, non-local ones are sensitive to changes that occur far away.

\smallskip

In \cite{Y16}, the author studies an optimization problem in heat conduction with minimal temperature constraint, interior heating, and exterior insulation. The model is generated by the Laplace operator. Using several perturbation parameters, a new functional is studied, and the problem is eventually reduced to an Alt-Caffarelli type minimization problem. The rough idea is that these perturbed functions have regular enough solutions that converge to a solution of the original problem. Interestingly, there is no need to pass to the limit in one of those parameters, as the required configuration is already achieved once the parameter is small enough. This approach is later used in \cite{TU17} in the study of the problem for the infinity Laplacian operator as a limit of solutions from the divergence structured $p$-Laplace operator. 

\smallskip

In this paper, we study the non-local counterparts of these problems. Such models arise, for instance, in the study of best insulation devices and in financial mathematics as a pricing model for American options, \cite{BM24, CT04, M24}. For example, when looking for a rational price of an American option, where the prices of assets are modeled by a L\'evy process, we encounter a non-local obstacle-type problem -- the obstacle being the payoff function. 

\smallskip

Mathematically, for given a bounded domain $\Omega\subset\R^n$ with smooth boundary and a given smooth, non-negative function $\varphi:\mathbb{R}^n\rightarrow\mathbb{R}$,  which is compactly supported in $\Omega$, we look for a function $u:\R^n\rightarrow\R$ that minimizes the energy
\begin{equation}\label{P}\tag{P}
\min_{u\in\mathbb{M}}J(u),
\end{equation}
where
\begin{equation}\label{Jdef}
    J(u):=\frac{c_{n,s}}{4}\int_{\R^n}\int_{\R^n}\frac{\left|u(x)-u(y)\right|^2}{|x-y|^{n+2s}}\,\mathrm{d}x\,\mathrm{d}y,
\end{equation}
and $\mathbb{M}$ is the set of functions $u\in H^s(\R^n)$ for which 
\begin{equation}\label{problem}
\begin{cases}
&u\ge\varphi, \\
&(-\Delta)^su \ge 0 \text{ in } \Omega, \\
&(-\Delta)^su=0 \text{ in } \{u>0\}\setminus\Omega, \\
&\left| \{u>0\}\setminus\Omega\right|=\gamma.
\end{cases}
\end{equation}
Here, 
$$
(-\Delta)^su(x):=c_{n,s}\,\textrm{PV}\int_{\R^n}\frac{u(x)-u(y)}{|x-y|^{n+2s}}\,\mathrm{d}y
$$
is the fractional Laplacian and $c_{n,s}$ is a normalization constant (see, for example, \cite{PST25, S24, T24}).  In \eqref{problem}, the constant $\gamma>0$ is pre-determined, and $|E|$ denotes the $n$-dimensional Lebesgue measure of the set $E\subset\R^n$. 

To treat problem \eqref{P}, through the penalization technique, we reduce it to the problem studied in \cite{TT15}, where no lower bound is imposed, and solutions do not obey any prescribed behavior inside or outside of the domain. Unlike \cite{TU17,Y16}, the operator is non-local and does not have a divergence structure. A way to overcome this can be the localization of the problem (as in \cite{TT15}), using the celebrated Caffarelli-Silvestre extension argument, \cite{CS07} -- writing the fractional Laplacian as a ``Dirichlet to Neumann'' map. The latter, however, comes with a price of a weighted term in the functional.

In this paper, we do not use the aforementioned extension argument. Consequently, most of our conclusions for the minimizers of the penalized functional remain valid not only for the fractional Laplacian but also for non-local operators with kernels comparable to that of the fractional Laplacian \cite{CS09} (or even more general kernels as in \cite{CTU20}). This method falls short only in the reduction argument where we use \cite{TT15} due to the specific application of the Caffarelli-Silvestre extension argument to the fractional Laplacian. Nevertheless, we prove that solutions to problem \eqref{P} are locally non-degenerate and $s$-H\"older continuous, achieving optimal regularity. Additionally, we show that the exterior free boundary, that is, the set $\partial\left(\{u>0\}\right)$, has finite $(n-1)$-dimensional Hausforff measure. Furthermore, we obtain local $C^{1,\alpha}$ regularity results for the corresponding interior and exterior free boundaries.

\smallskip

The paper is organized as follows: we start in  \Cref{prelim}, with the mathematical set-up of a three-parameter penalization problem and prove the existence of its minimizers (\Cref{existence}). We also collect some known results for future reference. In \Cref{s2}, we obtain the boundedness of minimizers for the penalized problem (\Cref{p2.1}). In \Cref{s3}, we obtain uniform (in one of the parameters) estimates, which allow us to reduce the problem to the study of a two-parameter penalization functional (\Cref{c3.1}). In \Cref{s4}, we deduce uniform H\"older estimates in one of the remaining two parameters (\Cref{t4.1}) - reducing the problem to the study of a single parameter minimization problem, which is studied in \Cref{s5}. We show that when this last parameter is small enough (but fixed), then solutions of the penalized problem turn into solutions of the original problem (\Cref{t5.2}). This, in turn, implies $s$-H\"older (optimal) regularity, non-degeneracy, and positive density results (\Cref{t5.3}). We conclude the paper with local $C^{1,\alpha}$ regularity results for the corresponding interior and exterior free boundaries (\Cref{t6.2} and \Cref{t5.5}).

\section{Preliminaries and first results}\label{prelim}
In this section, we introduce a three-parameter penalization problem and derive the existence of minimizers. We also recall two known results for future reference and finish the section by representing some notations that are used throughout the paper.

\smallskip

For three parameters $\sigma, \delta,\e\in (0,1)$, we introduce the following penalized functional
\begin{equation}\label{2.1}
I_{\sigma,\delta,\varepsilon}(u):=J(u) + \int_{\R^n} g_\sigma(u-\varphi) \,\mathrm{d} x + f_\varepsilon\left(\int_{\Omega^c} h_\delta(u(x))\dd x\right),
\end{equation}
where $J(u)$ is defined by \eqref{Jdef},   
\begin{enumerate}[$(i)$]
\item the function $g_\sigma:\R\to\R$ is smooth, non-negative, decreasing, convex, and such that 
$$
g_\sigma(t)=
\begin{cases} -\frac{1}{\sigma}(t+\frac{\sigma}{2}), & t\le -\sigma, \\ \text{smooth}, &  -\sigma\le t \le 0, \\ 0, &   t\ge0;
\end{cases}	
$$
\item the function $h_\delta:\R\rightarrow\R$  is continuous and vanishes on $(-\infty,0]$, it is linear on~$[0,\delta]$, and it equals 1 on $[\delta,+\infty)$;
\item the function $f_\varepsilon:\R\rightarrow\R$ is given by 
$$ 
f_\e(t)=\begin{cases} \frac{1}{\e}(t-\gamma) & \text{for }t\ge\gamma, \\ \varepsilon(t-\gamma) & \text{for }t\leq\gamma. \end{cases}	
$$	
\end{enumerate}
The term $g_\sigma(v-\varphi)$ penalizes functions that do not lie above $\varphi$, the term $h_\delta$ regularizes the map $u\mapsto|\{u>0\}\setminus\Omega|$, and $f_\varepsilon$ penalizes functions whose positivity set does not have the desired volume $\gamma$  (see \cite{TU17,Y16}).

Throughout the paper, we assume that $\varphi\in C_0^\infty(\Omega)$ is such that
\begin{equation}\label{condition on varphi}
    |(-\Delta)^s\varphi|\le C_\varphi,
\end{equation}
where $C_\varphi$ is a constant depending only on $\varphi$, $s$ and $n$.
\begin{proposition}\label{existence}
    The functional $I_{\sigma,\delta,\varepsilon}:H^s(\R^n) \longrightarrow \R$, defined by \eqref{2.1}, has a minimizer.
\end{proposition}
\begin{proof}
    Observe that
	$$
	I_{\sigma,\delta,\e}(\varphi)\le J(\varphi)=:M<\infty,
	$$ where the constant $M$ is independent of $\sigma,\delta,\e$. Since $I_{\sigma,\delta,\varepsilon}\ge -\e \gamma$, there is a minimizing sequence $\{u_k\}$ such that $I_{\sigma,\delta,\varepsilon}(u_k)\le M+1$, for $k$ large enough. The sequence $\{u_k\}$ is bounded in $H^s(\R^n)$. Thus, we can extract a weakly converging subsequence in $H^s(\R^n)$, which we still denote by $\{u_k\}$. If $u$ is the weak limit, by the lower semicontinuity of $J$, we have
	$$
	J(u)\le\liminf_{k\to\infty} J(u_k).
        $$
    To pass to the limit in the other terms of $I_{\sigma,\delta,\varepsilon}$, we use the fractional analog of the Rellich-Kondrachov Theorem, \cite[Theorem 7.1]{DPV12}. The latter implies $H^s\subset\subset L^2$ on bounded domains. Since $u_k\to u$ weakly in $H^s(\R^n)$ and $\R^n=\displaystyle\cup_{i=1}^\infty B_i$, where $B_i$ is the ball of radius $i$ centered at the origin, we have, up to a subsequence, $u_k\to u$ strongly in $L^2(B_i)$, for each $i\in\mathbb{N}$, and hence $u_k\to u$ a.e. on $B_i$. By a diagonal argument, we conclude that up to a subsequence, $u_k\to u$ a.e. in $\R^n$, as $k\to\infty$. Since $g_\sigma$ is smooth and non-negative, using Fatou's lemma, we get
     $$
     \int_{\R^n}g_\sigma(u-\varphi)\dd x\le\liminf_{k\to\infty}\int_{\R^n}g_\sigma(u_k-\varphi)\dd x.
     $$        
            Similarly, since $h_\delta(u_k)\to h_\delta(u)$ a.e. in $\R^n$, and $f_{\varepsilon}$ is Lipschitz continuous and increasing, we deduce      
        $$
        f_\varepsilon\left(\int_{\Omega^c} h_\delta(u(x))\dd x\right)\le\liminf_{k\to\infty}f_\varepsilon\left(\int_{\Omega^c}           h_\delta(u_k(x))\dd x\right).                  
	$$
	Hence, 
	$$
	I_{\sigma,\delta,\e}(u) \le \liminf_{k\to\infty} I_{\sigma,\delta,\e}(u_k)= \inf_{w\in H^s(\R^n)}  I_{\sigma,\delta,\e}(w).
	$$
    Therefore, $u$ is a minimizer of $I_{\sigma,\delta,\e}$.
\end{proof}
Next, we recall two theorems from \cite{TT15}. For the proof of the following result, we refer the reader to \cite[Theorem 3.1]{TT15}.
\begin{theorem}\label{TTfb}
    If $u$ is a minimizer of 
    \begin{equation}\label{TTproblem}
    J(u)+f_\varepsilon(|\{u>0\}\setminus\Omega|),    
    \end{equation}
    where $J(u)$ is defined by \eqref{Jdef}, then
	\begin{itemize}
		\item $\mathcal{H}^{n-1}(\mathcal{K}\cap\partial\{u>0\}\cap\mathbb{R}^n)<\infty$, for every compact set $\mathcal{K}\subset\Omega$.
		\item The reduced free boundary $\partial^*\{u>0\}\cap\mathbb{R}^n$ is locally a $C^{1,\beta}$ surface, for some $\beta\in(0,1)$.
	\end{itemize}
\end{theorem}
The proof of the next theorem can be found in \cite[Theorem 5.1]{TT15}.
\begin{theorem}\label{TT}
    If $\varepsilon>0$ is small enough, then any minimizer $u$ of \eqref{TTproblem}  satisfies $|\{u>0\}\setminus\Omega|=\gamma$. In particular, it is a minimizer of $J(u)$.   
\end{theorem}
For future reference, we also recall the following result from \cite[Proposition 2.9]{S07} (see also \cite{PST25, T24}).
\begin{proposition}\label{prop:silv-regularity}
	Let $u\in L^\infty(\R^n)$ be such that $w : = (-\Delta)^{s} u \in L^\infty(\R^n)$.
	\begin{itemize}
		\item If $2 s \le 1$, then, for any $\theta< 2 s$, 
		\[
		\|u\|_{C^{0,\theta}}\le C \left(\|u\|_\infty+\|w\|_\infty\right).
		\]
		\item If $2 s > 1$, then, for any $\theta < 2 s -1$,
		\[
		\|u\|_{C^{1,\theta}}\le C \left(\|u\|_\infty+\|w\|_\infty\right).
		\]
	\end{itemize} In both cases, the constant $C>0$ depends only on $n$, $\theta$, and $s$.
\end{proposition}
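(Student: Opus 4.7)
The plan is a two-step strategy: reduce by scaling to a normalized setting, then run a Campanato-type oscillation-decay iteration that, on each ball, compares $u$ against its $\alpha$-harmonic replacement.

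\textbf{Reduction and core estimate.} Dividing by $\|u\|_\infty+\|w\|_\infty$ and translating, I may assume $\|u\|_\infty,\|w\|_\infty\le 1$ and work at the origin. It then suffices to prove the pointwise decay
\[
\osc_{B_r}u \le C r^\lambda \quad \text{for every } r\in(0,1),
\]
with $C$ depending only on $n,\alpha,\lambda$. The basic ingredient is the interior regularity of bounded $\alpha$-harmonic functions: if $v\in L^\infty(\R^n)$ with $(-\Delta)^\alpha v=0$ in $B_r$, then Poisson-kernel bounds on the ball give $\osc_{B_{r/2}}v\le \theta\,\osc_{B_r}v$ for some $\theta=\theta(n,\alpha)\in(0,1)$, which can be made arbitrarily small by iterating on dyadic subballs.

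\textbf{Iteration step.} Given $r\in(0,1)$, let $v$ be the $\alpha$-harmonic replacement of $u$ in $B_r$: $(-\Delta)^\alpha v=0$ in $B_r$ and $v=u$ on $\R^n\setminus B_r$. The difference $\psi:=u-v$ is supported in $\overline{B_r}$ and satisfies $(-\Delta)^\alpha\psi=w$ there, so a comparison with the explicit supersolution $c_{n,\alpha}(r^{2\alpha}-|x|^{2\alpha})_+$ yields
\[
\|u-v\|_{L^\infty(\R^n)}\le C r^{2\alpha}\|w\|_\infty.
\]
Combined with the oscillation decay of $v$, this gives
\[
\osc_{B_{r/2}}u \le \theta\,\osc_{B_r}u + C r^{2\alpha}.
\]
A standard dyadic iteration at scales $r_k=2^{-k}$ then produces $\osc_{B_r}u\le C r^\lambda$ for every $\lambda<2\alpha$, which is the first bullet.

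\textbf{The $C^{1,\lambda}$ case and the main obstacle.} When $2\alpha>1$, the iteration above saturates at Lipschitz regularity, so I would upgrade it by subtracting at each scale the best affine approximation $\ell_k(x)=a_k+b_k\cdot x$, applying the split-and-compare argument to $u-\ell_k$, and invoking $C^{1,\beta}$ interior estimates for $\alpha$-harmonic functions (bounded fractional-harmonic functions being actually smooth in the interior). The iteration produces a Cauchy sequence of affine approximations converging to the first-order Taylor polynomial at the base point, with residual decaying at rate $r^{1+\lambda}$ for every $\lambda<2\alpha-1$. The delicate point in either case is handling the non-local tails: constructing a barrier that gives the sharp $r^{2\alpha}$ bound for $u-v$, and controlling how the values of $u$ far from $B_r$ enter the interior oscillation of its $\alpha$-harmonic replacement. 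Both hinge on quantitative decay of the Poisson and Green kernels of $(-\Delta)^\alpha$ on the ball; once these are in place, the Campanato iteration runs mechanically and the two cases $2\alpha\le 1$ and $2\alpha>1$ are handled in parallel.
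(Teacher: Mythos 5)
This proposition is not proved in the paper: it is quoted verbatim from Silvestre \cite[Proposition 2.9]{S07}, so there is no internal proof to compare your argument against. That said, the strategy you choose---normalize, decompose on each ball into the $\alpha$-harmonic replacement $v$ plus a small correction controlled by a barrier, then run a Campanato-type oscillation or affine-approximation iteration---is a legitimate and standard route in the Caffarelli--Silvestre toolbox, so the question is whether the details hold up. A minor point first: the barrier $c_{n,\alpha}(r^{2\alpha}-|x|^{2\alpha})_+$ is not the right explicit profile; the function with constant fractional Laplacian on the ball $B_r$ is $(r^2-|x|^2)_+^\alpha$, and that (or a rescaling of it) is what yields $\|u-v\|_{L^\infty}\lesssim r^{2\alpha}\|w\|_\infty$.

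The substantive gap is the oscillation-decay lemma you invoke for the $\alpha$-harmonic replacement: the inequality $\osc_{B_{r/2}}v\le\theta\,\osc_{B_r}v$ with a universal $\theta=\theta(n,\alpha)<1$ is \emph{false} for $\alpha$-harmonic functions. After subtracting $\inf_{B_r}v$, the shifted function is nonnegative in $B_r$ but not in $\R^n$, so the Harnack inequality does not apply (Kassmann's example shows that nonlocal Harnack genuinely requires a global sign condition). What the Poisson representation actually gives is an interior gradient bound of the form
\[
\sup_{B_{r/2}}|\nabla v|\;\le\;\frac{C}{r}\left(\osc_{B_r}v+r^{2\alpha}\int_{\R^n\setminus B_r}\frac{|u(y)-u(x_0)|}{|y-x_0|^{n+2\alpha}}\,dy\right),
\]
with an unavoidable nonlocal tail. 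In your setup the crude bound for the tail is $O(1)$ (not $o(\osc_{B_r}u)$), so the step $\osc_{B_{r/2}}u\le\theta\,\osc_{B_r}u+Cr^{2\alpha}$, followed by a dyadic iteration, simply does not follow as stated. You flag the tails as a ``delicate point'' in the final paragraph, but treating them as an after-the-fact technicality hides that they change the structure of the proof: the tail integral is only $O(r^\lambda)$ once one already knows $\osc_{B_s}u\lesssim s^\lambda$ for all $s\ge r$, so the iteration must be run as an induction on dyadic scales in which the decay already established at coarser scales is fed into the tail term at the next finer scale, and one must verify that the resulting geometric rate closes simultaneously against the $r^{2\alpha}$ source term and the tail. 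Until that is spelled out, the core iteration step is unjustified; the same objection carries over to the $C^{1,\lambda}$ case, where the best-affine-plane version of the tail must be handled in the same inductive way.
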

\begin{remark}\label{r2.3}
	\Cref{prop:silv-regularity} holds in a neighborhood of any point $x_0\in\R^n$ (see the proof of \cite[Proposition 2.9]{S07}).
\end{remark}

\subsection{Notations}
$H^s(\R^n)$ is the fractional Sobolev space of order $ s\in(0,1)$ with norm 
$$
\|u\|_{H^s(\R^n)}:= \left(\|u\|_{L^2(\R^n)}^2+\int_{\R^n}\int_{\R^n}\frac{ \left|u(x)-u(y)\right|^2}{|x-y|^{n+2s}}\,\mathrm{d}x\,\mathrm{d}y\right)^{\frac12}.
$$
$B_r(x_0)$ is the ball of radius $r$ centered at $x_0$, and $B_r:=B_r(0)$, $|E|$ is the $n$-dimensional Lebesgue measure of the set $E$. We also use the following notations:
$$
\|u\|_{L^\infty(\Omega)}:=\sup_{\Omega}|u|\,\,\,\mbox{ and }\,\,\,\|u\|_\infty:=\|u\|_{L^\infty(\R^n)}.
$$
For a multi-index $\beta=(\beta_1,\beta_2,\ldots,\beta_n)$, we use $|\beta|:=\beta_1+\beta_2+\ldots+\beta_n$. For $\alpha\in(0,1)$ the H\"older semi-norm is defined by:
$$
[u]_{C^{0,\alpha}(\Omega)}:=\sup_{x\neq y}\frac{|u(x)-u(y)|}{|x-y|^\alpha},
$$
$$
[u]_{C^{1,\alpha}(\Omega)}:=\max_{|\beta|=1}[D^\beta u]_{C^{0,\alpha}(\Omega)},
$$
where $D^\beta u:=\partial_{x_1}^{\beta_1}\ldots\partial_{x_n}^{\beta_n}u$. Also for $k=0,1$,
$$
\|u\|_{C^{k,\alpha}(\Omega)}:=\sum_{|\beta|\le k}\|D^\beta u\|_{L^\infty(\Omega)}+\sum_{|\beta|=k}[D^\beta u]_{C^{0,\alpha}(\Omega)}
$$
and
$$
\|u\|_{C^{k,\alpha}}:=\|u\|_{C^{k,\alpha}(\R^n)}.
$$

\smallskip

\section{\texorpdfstring{$L^\infty$}{L-infinity}-bounds}\label{s2}

In this section, we show that minimizes of $I_{\sigma,\delta,\varepsilon}$ are bounded.
\begin{proposition}\label{p2.1}
If $u$ is a minimizer of $I_{\sigma,\delta,\varepsilon}$, then 
\begin{equation}\label{2.2}
0\le u \le \| \varphi\|_\infty.
\end{equation}
\end{proposition} 
\begin{proof}	
	To prove the lower bound of \eqref{2.2}, we define a competing function $v\in H^s(\R^n)$ by
	\begin{equation*}
	v:= \begin{cases}
	u, & \text{ if }u\ge0,\\
	\frac{u}{2}, & \text{ if } u<0.
	\end{cases}
	\end{equation*}
	Clearly, $v\ge u$, hence, $v-\varphi\ge u-\varphi$, and since $g_{\sigma}$ is decreasing,
	$$
	\int_{\R^n} g_\sigma(v-\varphi)\,\mathrm{d}x\le\int_{\R^n}  g_\sigma(u-\varphi)\,\mathrm{d}x.
	$$
	Observe additionally that  
	$$
	h_\delta(v)=h_\delta(u).
	$$ 
	The latter is a consequence of the fact that $h_\delta$ vanishes on $(-\infty,0]$ and $v=u$ for $u\ge0$. Since $u$ is a minimizer, we estimate
	\begin{align*}
	0 &\le I_{\sigma,\delta,\varepsilon}(v)-I_{\sigma,\delta,\varepsilon}(u)\leq J(v)-J(u)\\
	&\le\int_{\{u<0\}}\int_{\{u<0\}}\frac{|v(x)-v(y)|^2-|u(x)-u(y)|^2}{|x-y|^{n+2s}}\,\mathrm{d}x\,\mathrm{d}y\\
	&=-\frac{3}{4}\int_{\{u<0\}}\int_{\{u<0\}}\frac{|u(x)-u(y)|^2}{|x-y|^{n+2s}}\,\mathrm{d}x\,\mathrm{d}y.
	\end{align*}
	Hence, one must have $|\{u<0\}|=0$, that is, $u\ge0$ almost everywhere.	
	
	To see that $u\le\|\varphi\|_\infty$, it is enough to take as a competing function 
	\[
	w:=\begin{cases}
	u, &  \text{ if } u\le\|\varphi\|_\infty,\\ 
	\frac12\left(u+\|\varphi\|_\infty\right), & \text{ if }u>\|\varphi\|_\infty.
	\end{cases}.
	\] 
	Observe that by the definition of $g_\sigma$ and $w$, one has
	$$
	\int_{\R^n} g_\sigma(w-\varphi)\,\mathrm{d}x=\int_{\R^n} g_\sigma(u-\varphi)\,\mathrm{d}x,
	$$
	since $g_\sigma(u-\varphi)=0$, when $u>\|\varphi\|_\infty$ and
	\[
	g_\sigma(w-\varphi)=\begin{cases}
		g_\sigma(u-\varphi), & \text{ if } u\le\|\varphi\|_\infty,\\ 
		0, & \text{ if }u>\|\varphi\|_\infty.
	\end{cases}.
	\] 
	Similarly, $h_\delta(w)=h_\delta(u)$, once $\delta>0$ is small enough: $\delta\le\|\varphi\|_\infty$. Again, as $u$ is a minimizer, one has
    \begin{align*}
	0&\le I_{\sigma,\delta,\varepsilon}(w)-I_{\sigma,\delta,\varepsilon}(u)= J(w)-J(u)\\
	&\le-\frac34\int_{\{u>\|\varphi\|_\infty\}}\int_{\{u>\|\varphi\|_\infty\}}\frac{|u(x)-u(y)|^2}{|x-y|^{n+2s}}\,\mathrm{d}x\,\mathrm{d}y.
	\end{align*}
	Thus, $u\le\|\varphi\|_\infty$ almost everywhere.
\end{proof}
\begin{remark}\label{r2.1}
    Recall that (see, for example, \cite[Lemma 12.13]{S24})
    for any $u,v\in H^s(\R^n)$, one has
    $$
    \int_{\R^n}(-\Delta)^{s/2}u(-\Delta)^{s/2}v\,dx=\frac{c_{n,s}}{2}\int_{\R^n}\int_{\R^n}\frac{(u(x)-u(y))(v(x)-v(y))}{|x-y|^{n+2s}}\,dx\,dy,
    $$
    and a minimizer $u$ of $I_{\sigma,\delta,\varepsilon}$ satisfies the following Euler-Lagrange equation
    \begin{equation}\label{EL}
    (-\Delta)^su+g'_\sigma(u-\varphi)+f'_\varepsilon\left(\int_{\Omega^c} h_\delta(u)\,\mathrm{d}x\right)h'_\delta(u) \chi_{\Omega^c}=0,
    \end{equation} 
    meaning,     
    \begin{equation*}
        \begin{split}
            &\int_{\R^n}(-\Delta)^{s/2}u(-\Delta)^{s/2}v\,dx+\int_{\R^n}g'_\sigma(u-\varphi)v\,dx\\
            &+\int_{\R^n}f'_\varepsilon\left(\int_{\Omega^c} h_\delta(u)\,\mathrm{d}x\right)h'_\delta(u)\chi_{\Omega^c}v\,dx=0,\,\,\,\forall v\in H^s{(\R^n)}.
        \end{split}
    \end{equation*}
\end{remark}
\begin{remark}\label{new remark}
    If $u$ is smooth enough, then (see, for example, \cite{S24})
    $$
    \int_{\R^n}(-\Delta)^{s/2}u(-\Delta)^{s/2}v\,dx=\int_{\R^n}(-\Delta)^suv\,dx,\,\,\,\forall v\in H^s(\R^n).
    $$
    Otherwise, the identity makes sense provided the right-hand side is interpreted as the duality pairing between $H^s$ and $H^{-s}$.    
\end{remark}

\begin{remark}\label{r2.2}
	Observe that $\partial\{u>0\}\subseteq\Omega^c$. Indeed, as $u\geq0$ in $\R^n$, if $x_0\in\Omega$ and $u(x_0)=0$, then $(-\Delta)^su(x_0)<0$, unless $u$ is identically zero in $\R^n$; therefore, $u$ is not an admissible function. Hence $u>0$ in $\Omega$.
\end{remark}

\smallskip

\section{Uniform estimates}\label{s3}
In this section, we prove estimates for minimizers of $I_{\sigma,\delta,\varepsilon}$ that are uniform in the parameter $\sigma$. These allow us to pass to the limit as $\sigma\to0$.

\begin{lemma}\label{l3.1}
	If $\varphi\in C_0^\infty(\Omega)$ is such that \eqref{condition on varphi} holds, and
    $u_{\sigma, \delta,\varepsilon}$ is a minimizer of $I_{\sigma,\delta, \e}$, then  	
	\begin{equation}\label{3.1}
	\|g'_\sigma(u_{\sigma, \delta,\varepsilon}-\varphi)\|_\infty\le C_\varphi,
	\end{equation} 
	where $C_\varphi>0$ is as in \eqref{condition on varphi}, and is independent of $\delta$, $\sigma$, and $\varepsilon$.
\end{lemma}
\begin{proof}
This follows from the Euler-Lagrange equation. More precisely, if $u=u_{\sigma,\delta,\varepsilon}$ is a minimizer of $I_{\sigma,\delta,\varepsilon}$, $\tilde{u}:=u-\varphi$, since $\varphi$ is supported in $\Omega$, then $u=\tilde{u}$ in $\Omega^c$,  and \Cref{r2.1} for every $v\in H^s(\R^n)$ gives
\begin{equation*}\label{3.3}
        \begin{split}
            &\int_{\R^n}(-\Delta)^{s/2}\tilde{u}(-\Delta)^{s/2}v\,dx+\int_{\R^n}(-\Delta)^{s/2}\varphi(-\Delta)^{s/2}v\,dx\\
            &+\int_{\R^n}g'_\sigma(\tilde{u})v\,dx+\int_{\R^n}f'_\varepsilon\left(\int_{\Omega^c} h_\delta(u)\,\mathrm{d}x\right)h'_\delta(\tilde{u})\chi_{\Omega^c}v\,dx=0.
        \end{split}   
\end{equation*}
As $g_\sigma'$ is a bounded smooth function, and $\tilde{u}$ is bounded, for any $k\in\mathbb{N}$, we can take $[g'_\sigma(\tilde{u})]^k$ as a test function. Indeed, since $u\in H^s(\R^n)$, and
$$
\big|[g'_\sigma(\tilde{u}(x))]^k-[g'_\sigma(\tilde{u}(y))]^k\big|\le\|([g'_\sigma]^k)'\|_{L^\infty}|\tilde{u}(x)-\tilde{u}(y)|,
$$
then also $[g'_\sigma(\tilde{u})]^k\in H^s(\R^n)$.
Thus,
\begin{equation*}\label{n4.2}
    \begin{split}
            &\int_{\R^n}(-\Delta)^{s/2}\tilde{u}(-\Delta)^{s/2}[g'_\sigma(\tilde{u}(x))]^k\,dx+\int_{\R^n}(-\Delta)^{s/2}\varphi(-\Delta)^{s/2}[g'_\sigma(\tilde{u}(x))]^k\,dx\\
            &+\int_{\R^n}[g'_\sigma(\tilde{u}(x))]^{k+1}\,dx+\int_{\R^n}f'_\varepsilon\left(\int_{\Omega^c} h_\delta(u)\,\mathrm{d}x\right)h'_\delta(\tilde{u})\chi_{\Omega^c}[g'_\sigma(\tilde{u}(x))]^k\,dx=0.
        \end{split}
\end{equation*}
Note that since $u\ge0=\varphi$ outside of $\Omega$, then $g'_\sigma(\tilde{u})$ is supported in $\Omega$. Hence, the above integrals are all, in fact, only over $\Omega$. Observe also that the last integral on the left-hand side is zero, as the integrand involves a product of functions supported in $\Omega$ and in $\Omega^c$. Furthermore, as $g_\sigma'$ is increasing, taking $k$ odd and recalling  \Cref{r2.1}, we deduce that the first term in the left-hand side is non-negative. Thus,
\begin{equation*}
    \begin{split}
        \int_{\Omega}[g'_\sigma(\tilde{u}(x))]^{k+1}\,dx&\le-\int_{\Omega}(-\Delta)^{s/2}\varphi(-\Delta)^{s/2}[g'_\sigma(\tilde{u}(x))]^k\,dx\\
        &=-\int_{\Omega}(-\Delta)^s\varphi[g_\sigma'(\tilde{u})]^k\,dx,
        \end{split}
\end{equation*}
where the equality follows from the regularity of $\varphi$ (recall \Cref{new remark}). Since $k$ is odd, recalling again that $g_\sigma'\le0$, we have
\begin{equation*}    
        \int_\Omega |g'_\sigma( \tilde{u})|^{k+1}\le\int_\Omega|(-\Delta)^s\varphi||g'_\sigma( \tilde{u})|^{k}\,dx.
\end{equation*}
Applying the H\"older inequality, we get
$$
\int_\Omega|g'_\sigma( \tilde{u})|^{k+1}\le \left[\int_\Omega|(-\Delta)^s\varphi|^{k+1}\,dx\right]^{\frac{1}{k+1}}
\left[\int_\Omega|g'_\sigma(\tilde{u})|^{k+1}\,dx\right]^{\frac{k}{k+1}}.
$$ 
Therefore,
\begin{equation}\label{3.5}
	\|g'_\sigma(\tilde{u})\|_{L^{k+1}(\Omega)}
	\le C_\varphi|\Omega|^{\frac{1}{k+1}}.
\end{equation}
Since $g'_\sigma(\tilde{u})$ is a smooth function supported in $\Omega$, we can let $k\rightarrow+\infty$ in \eqref{3.5}, obtaining \eqref{3.1}.
\end{proof}
As a consequence of \Cref{l3.1}, we have the following result.
 \begin{theorem}\label{t3.1}
	Let $\varphi\in C_0^\infty(\Omega)$ be such that \eqref{condition on varphi} holds, and $u_{\sigma,\delta,\varepsilon}$ be a minimizer of $I_{\sigma,\delta,\varepsilon}$.
	\begin{itemize}
		\item If $2 s \le 1$, then, for any $\theta<2s$, 
		\[
		\|u_{\sigma,\delta,\varepsilon}\|_{C^{0,\theta}}\le C \left(C_\varphi
		+\frac{1}{\varepsilon\delta}\right).
		\]
		
		\item If $2s>1$, then, for any $\theta<2s-1$,
		\[
		\|u_{\sigma,\delta,\varepsilon}\|_{C^{1,\theta}}\le C\left(C_\varphi
		+\frac{1}{\varepsilon\delta}\right).
		\]
	\end{itemize} In both cases, the constant $C>0$ depends only on $n$, $\theta$, $\lambda$ and $s$.
\end{theorem}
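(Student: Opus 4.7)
The plan is to read Theorem \ref{t3.1} as a direct corollary of Lemma \ref{l3.1} combined with the Silvestre regularity result in Proposition \ref{prop:silv-regularity}. The Euler--Lagrange equation from Remark \ref{r2.1} lets us rewrite $(-\Delta)^\alpha u_{\sigma,\delta,\varepsilon}$ in terms of quantities we can already control; plugging those bounds into Proposition \ref{prop:silv-regularity} will close the argument.

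Concretely, I would first recall from Remark \ref{r2.1} that
\[
2(-\Delta)^\alpha u_{\sigma,\delta,\varepsilon}=g'_\sigma(u_{\sigma,\delta,\varepsilon}-\varphi)+f'_\varepsilon\!\left(\int_{\Omega^c}h_\delta(u_{\sigma,\delta,\varepsilon})\,dx\right)h'_\delta(u_{\sigma,\delta,\varepsilon})\chi_{\Omega^c}.
\]
For the first term on the right-hand side I invoke Lemma \ref{l3.1}, which gives the key uniform-in-$\sigma$ bound
\[
\|g'_\sigma(u_{\sigma,\delta,\varepsilon}-\varphi)\|_\infty\le C\Bigl(C(\varphi)+\tfrac{1}{\varepsilon\delta}\Bigr).
\]
For the second term, I would simply use the construction of $f_\varepsilon$ and $h_\delta$: $|f'_\varepsilon|\le \tfrac{1}{\varepsilon}$ and $|h'_\delta|\le\tfrac{1}{\delta}$, so this term is also bounded in sup-norm by $C/(\varepsilon\delta)$. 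Adding the two contributions yields
\[
\|(-\Delta)^\alpha u_{\sigma,\delta,\varepsilon}\|_\infty\le C\Bigl(C(\varphi)+\tfrac{1}{\varepsilon\delta}\Bigr),
\]
with $C$ independent of $\sigma$.

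Next, Proposition \ref{p2.1} already tells us that $0\le u_{\sigma,\delta,\varepsilon}\le\|\varphi\|_\infty$, so both $u_{\sigma,\delta,\varepsilon}$ and $w:=(-\Delta)^\alpha u_{\sigma,\delta,\varepsilon}$ are in $L^\infty(\R^n)$ with the explicit bounds just derived. Applying Proposition \ref{prop:silv-regularity} to $u_{\sigma,\delta,\varepsilon}$ then produces a $C^{0,\lambda}$ estimate when $2\alpha\le 1$ (for any $\lambda<2\alpha$) and a $C^{1,\lambda}$ estimate when $2\alpha>1$ (for any $\lambda<2\alpha-1$), in each case with constant depending only on $n$, $\lambda$, $\alpha$, and where $\|\varphi\|_\infty$ has been absorbed into $C(\varphi)$.

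There is essentially no obstacle here, since all technical work is absorbed into the two cited results; the only thing to watch is bookkeeping of constants, namely checking that the $\|u_{\sigma,\delta,\varepsilon}\|_\infty$ term in Proposition \ref{prop:silv-regularity} (which is controlled by $\|\varphi\|_\infty$) can be folded into the $C(\varphi)$ appearing in the stated bound, so that the final dependence on parameters is exactly $C(\varphi)+\tfrac{1}{\varepsilon\delta}$ as displayed, and in particular independent of $\sigma$, which is the point of this section.
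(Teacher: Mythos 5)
Your proposal is correct and matches the paper's proof exactly: bound the right-hand side of the Euler--Lagrange equation using Lemma \ref{l3.1} for the $g'_\sigma$ term and the elementary bounds $|f'_\varepsilon|\le 1/\varepsilon$, $|h'_\delta|\le 1/\delta$ for the other term, then apply Proposition \ref{prop:silv-regularity} together with the $L^\infty$ bound \eqref{2.2}. The paper is slightly terser, but the decomposition and the two cited ingredients are the same.
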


\begin{proof}
    Recalling \Cref{p2.1}, and the fact that 
    $$
    \left|f^\prime_\varepsilon\right|\le\frac{1}{\varepsilon}\,\,\textrm{ and }\,\left|h^\prime_\delta\right|\le\frac{1}{\delta},
    $$
	and using \eqref{3.1}, the first term in \eqref{EL} can be estimated uniformly in $\sigma$, as it identifies with a bounded function, that is, 
	\[
	\left|(-\Delta)^su_{\sigma,\delta,\varepsilon}\right|\le C_\varphi
	+\frac{1}{\varepsilon\delta}.
	\] 
    The latter follows from the fact that weak solutions are also distributional solutions.
    By \Cref{prop:silv-regularity}, if $2s\le1$ and $\theta<2s$, one has
	\[
	\|u_{\sigma, \delta,\varepsilon}\|_{C^{0,\theta}} \le C \left( \|u_{\sigma, \delta,\varepsilon}\|_\infty + C_\varphi
	+\frac{1}{\varepsilon\delta}\right).
	\] 
	Taking into account \eqref{2.2}, we obtain the first part of the theorem. Similarly, the second part of the theorem holds as well. 
\end{proof}

\begin{corollary}\label{c3.1}
	Up to a subsequence, as $\sigma\rightarrow0$, the function $u_{\sigma,\delta,\varepsilon}$ converges to a function $u_{\delta,\varepsilon}$ locally uniformly in $C^{\alpha}(\R^{n})$, for any $0<\alpha<2s$, and weakly in $H^s(\R^n)$. Moreover, $u_{\delta,\varepsilon}\ge\varphi$.
\end{corollary}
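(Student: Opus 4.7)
The plan is to combine the uniform estimates from the preceding section with an Arzelà--Ascoli extraction, and then to exploit the sharp control on $g'_\sigma$ provided by Lemma \ref{l3.1} in order to recover the obstacle condition $u_{\delta,\varepsilon}\ge\varphi$ in the limit.

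First, I obtain local uniform convergence. By Theorem \ref{t3.1}, for fixed $\delta,\varepsilon$ the family $\{u_{\sigma,\delta,\varepsilon}\}_\sigma$ is uniformly bounded in $C^{0,\lambda}(\R^n)$ (or even in $C^{1,\lambda}(\R^n)$ when $2\alpha>1$), while Proposition \ref{p2.1} gives a uniform $L^\infty$ bound. Applying Arzelà--Ascoli on a sequence of compact sets exhausting $\R^n$, together with a diagonal extraction, produces a subsequence converging locally uniformly to some limit $u_{\delta,\varepsilon}$, which inherits the same Hölder regularity on compact subsets.

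Next, I upgrade this to weak convergence in $H^\alpha(\R^n)$. Since $\varphi$ is an admissible competitor supported in $\Omega$, one has $I_{\sigma,\delta,\varepsilon}(\varphi)=J(\varphi)-\varepsilon\gamma$. The minimality of $u_{\sigma,\delta,\varepsilon}$, combined with $g_\sigma\ge 0$ and $f_\varepsilon\ge-\varepsilon\gamma$, then yields
\[
J(u_{\sigma,\delta,\varepsilon})\le I_{\sigma,\delta,\varepsilon}(u_{\sigma,\delta,\varepsilon})+\varepsilon\gamma\le I_{\sigma,\delta,\varepsilon}(\varphi)+\varepsilon\gamma=J(\varphi),
\]
uniformly in $\sigma$. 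Together with \eqref{2.2}, this produces a uniform $H^\alpha$ bound (the missing $L^2$ control being obtained via the fractional Sobolev embedding $\dot H^\alpha\hookrightarrow L^{2^*}$ and interpolation with the $L^\infty$ bound). A further extraction then delivers weak convergence in $H^\alpha(\R^n)$, and this weak limit must coincide with $u_{\delta,\varepsilon}$ by uniqueness of distributional limits.

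Finally, I verify $u_{\delta,\varepsilon}\ge\varphi$, which I expect to be the conceptual core of the proof. By Lemma \ref{l3.1}, $\|g'_\sigma(u_{\sigma,\delta,\varepsilon}-\varphi)\|_\infty$ is bounded by a constant $K=K(\varphi,\delta,\varepsilon)$ that is independent of $\sigma$. However, the explicit form of $g_\sigma$ forces $g'_\sigma(t)=-1/\sigma$ whenever $t\le -\sigma$, so an inequality $u_{\sigma,\delta,\varepsilon}(x)-\varphi(x)\le-\sigma$ at any point would imply $1/\sigma\le K$. Consequently, as soon as $\sigma<1/K$, one has $u_{\sigma,\delta,\varepsilon}\ge\varphi-\sigma$ pointwise, and sending $\sigma\to 0^+$ along the extracted subsequence yields $u_{\delta,\varepsilon}\ge\varphi$. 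The main technical nuisance I anticipate is the $L^2$ component of the uniform $H^\alpha$ bound, since the minimizers need not have uniformly compact support; the penalty $f_\varepsilon$ combined with the fractional Sobolev embedding handles this, but it is the one step requiring a small separate argument rather than a direct appeal to the results above.
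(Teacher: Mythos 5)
Your argument for local uniform convergence is the same as the paper's (Theorem~\ref{t3.1}, Proposition~\ref{p2.1}, Arzel\`a--Ascoli with a diagonal extraction). For the obstacle condition $u_{\delta,\e}\ge\varphi$, however, you take a genuinely different route. The paper uses an energy argument: from $I_{\sigma,\delta,\e}(u_{\sigma,\delta,\e})\le M$ and the pointwise lower bound $g_\sigma(t)\gtrsim |t|/\sigma$ for $t\le-\sigma$, one bounds the measure of $\{u_{\sigma,\delta,\e}-\varphi<-c/2\}\cap K$ by $O(\sigma)$ and concludes $|\{u_{\delta,\e}-\varphi<-c\}\cap K|=0$ for every $c>0$. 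You instead invoke Lemma~\ref{l3.1} directly: since $\|g'_\sigma(u_{\sigma,\delta,\e}-\varphi)\|_\infty\le K$ uniformly in $\sigma$ while $g'_\sigma(t)=-1/\sigma$ for $t\le-\sigma$, any point with $u_{\sigma,\delta,\e}-\varphi\le-\sigma$ would force $1/\sigma\le K$; hence for $\sigma<1/K$ one has the \emph{pointwise} lower bound $u_{\sigma,\delta,\e}>\varphi-\sigma$ everywhere, and passing to the locally uniform limit gives $u_{\delta,\e}\ge\varphi$. Both are correct. Your version is leaner and yields the stronger quantitative statement $u_{\sigma,\delta,\e}\ge\varphi-\sigma$ for $\sigma$ small, at the price of leaning on the Euler--Lagrange machinery behind Lemma~\ref{l3.1}; the paper's version needs only the raw energy bound $I\le M$ and the lower bound on $g_\sigma$ itself.

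One step in your writeup does not go through as stated: you claim to recover the uniform $L^2$ bound ``via the fractional Sobolev embedding $\dot H^\alpha\hookrightarrow L^{2^*}$ and interpolation with the $L^\infty$ bound.'' Since $2^*=2n/(n-2\alpha)>2$, interpolating between $L^{2^*}$ and $L^\infty$ only controls $L^p$ norms with $p\ge 2^*$, so it does not descend to $L^2$; on an unbounded domain there are bounded functions in $L^{2^*}\setminus L^2$. The correct mechanism does come from the penalty, but through measure control rather than interpolation: from $I_{\sigma,\delta,\e}(u_{\sigma,\delta,\e})\le M$, $J\ge 0$, and $g_\sigma\ge 0$ one gets $f_\e\bigl(\int_{\Omega^c}h_\delta(u_{\sigma,\delta,\e})\bigr)\le M$, hence $\int_{\Omega^c}h_\delta(u_{\sigma,\delta,\e})\le\gamma+\e M$. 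Combined with $0\le u_{\sigma,\delta,\e}\le\|\varphi\|_\infty$ and the structure of $h_\delta$, this bounds $\int_{\Omega^c}u_{\sigma,\delta,\e}^2$ by $(\|\varphi\|_\infty^2+\delta^2)(\gamma+\e M)$, and $\int_\Omega u_{\sigma,\delta,\e}^2\le\|\varphi\|_\infty^2|\Omega|$, giving the uniform $L^2$ control with no Sobolev embedding needed. (The paper itself does not spell this out, simply asserting $H^\alpha$ boundedness, so you were right to flag it as requiring a separate argument; the fix is just different from the one you sketched.)
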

\begin{proof}
    Observe that thanks to \Cref{p2.1} and \Cref{new proposition} below, $u_{\sigma,\delta,\varepsilon}$ is uniformly bounded in $H^s(\R^n)$. The convergence then follows immediately from \Cref{t3.1}, and the Arzel\`a-Ascoli Theorem. To show that $u_{\delta,\varepsilon}\ge\varphi$, take any $c>0$ and any compact set $\mathbb{K}\subset\R^{n}$. Then 
	\begin{equation}\label{3.6}
	\{u_{\delta,\varepsilon}-\varphi<-c\} \cap\mathbb{K}\subset\{u_{\sigma,\delta,\varepsilon}-\varphi<-c/2\}
	\cap\mathbb{K}
	\end{equation}
	for sufficiently small $\sigma>0$. 
	On the other hand, by the construction of $g_\sigma$ and inequality $I_{\sigma,\delta,\e }(u_{\sigma,\delta,\e })\le M$ (see the proof of \Cref{existence}), we have
	$$
	\frac{c}{2\sigma}
	\left|\{u_{\sigma,\delta,\e }-\varphi<-c/2\}\cap\mathbb{K}\right|\le\int_{\R^{n}} g_\sigma(u_{\sigma,\delta,\e }-\varphi)\le M<\infty.
	$$
	This, together with \eqref{3.6}, yields $|\{u_{\delta,\e }-\varphi<-c\}\cap\mathbb{K}|=0$, since otherwise we would have a contradiction in the last inequality once $\sigma>0$ is small enough. As the number  $c>0$ and the compact $\mathbb{K}\subset\R^{n}$ are arbitrary, we conclude that $u_{\delta,\varepsilon}\ge\varphi$.
\end{proof}

\smallskip

\section{H\"older regularity of minimizers}\label{s4}
The aim of this section is to pass to the limit as $\delta\to0$, and derive uniform H\"older estimates for minimizers. First, we show that minimizers can only have a finite measure of positivity set outside of $\Omega$. 
\begin{proposition}\label{new proposition}
    If $u$ is a minimizer of $I_{\sigma,\delta,\varepsilon}$, then $|\{u>0\}\cap\Omega^c|<\infty$.
\end{proposition}
\begin{proof}
    As in the proof of \Cref{existence}, there exists a constant $M>0$, independent of $\sigma$, $\delta$ and $\varepsilon$, such that
  $$  f_\varepsilon\left(\int_{\Omega^c}h_\delta(u)\,dx\right)<M.
  $$
  We claim that 
  $$
  T:=\int_{\Omega^c}h_\delta(u)\,dx\le\gamma+M\varepsilon.
  $$
  Indeed, otherwise for some $t>0$, one has $T>\gamma+M\varepsilon+t$, therefore,
  $$
  f_\varepsilon(T)=\frac{1}{\varepsilon}(T-\gamma)>\frac{1}{\varepsilon}(M\varepsilon+t)>M,
  $$
  which is a contradiction. Thus, 
  $$
  \int_{\Omega^c}h_\delta(u)\,dx\le\gamma+M\varepsilon.
  $$
  Recalling the definition of $h_\delta$, we have 
  $$
  |\{u\ge\delta\}\cap\Omega^c|\le
  \int_{\Omega^c}h_\delta(u)\,dx\le\gamma+M\varepsilon.
  $$
  Passing to the limit, as $\delta\to0$, we obtain  
  $$
  |\{u>0\}\cap\Omega^c|\le\gamma+M\varepsilon,
  $$
  and the result follows.
\end{proof}

\begin{lemma}\label{l4.1}
	If $w\in H^s(\R^n)$, $w\geq\varphi$, where $\varphi\in C_0^\infty(\Omega)$ satisfies \eqref{condition on varphi}, and $u_{\delta,\e}$ is as in \Cref{c3.1}, then
	\begin{equation}\label{4.1}
	\begin{split}
	J(w)&+\int_{\R^n}\int_{\R^n}\frac{[w(x)-w(y)][u_{\delta,\varepsilon}(y)-u_{\delta,\varepsilon}(x)]}{|x-y|^{n+2s}}\,\mathrm{d}x\,\mathrm{d}y \\
	&+f_\e'\left(\int_{\Omega^c}h_\delta(u_{\delta,\e})\right)
	\int_{\Omega^c}h_\delta'(u_{\delta,\e})(w-u_{\delta,\e})\geq0.
	\end{split}
	\end{equation}
\end{lemma}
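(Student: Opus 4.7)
The plan is to first establish \eqref{4.1} at the $\sigma$ level (with $u_{\sigma,\delta,\varepsilon}$ in place of $u_{\delta,\varepsilon}$) and then let $\sigma\to 0$ via Corollary~\ref{c3.1}. Writing $u_\sigma := u_{\sigma,\delta,\varepsilon}$ and introducing the bilinear form
\[
B(u,v) := \int_{\R^n}\int_{\R^n} \frac{(u(x)-u(y))(v(x)-v(y))}{|x-y|^{n+2\alpha}} \dd x \dd y,
\]
so that $J(u)=B(u,u)$, the cross integral in \eqref{4.1} equals $-2B(w, u_{\delta,\varepsilon})$; hence \eqref{4.1} is equivalent to $2J(w) - 2B(w, u_{\delta,\varepsilon}) + f_\varepsilon'(\cdot)\int_{\Omega^c} h_\delta'(u_{\delta,\varepsilon})(w - u_{\delta,\varepsilon}) \ge 0$.

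I would start from the minimality of $u_\sigma$: computing $\tfrac{d}{dt}\big|_{t=0^+} I_{\sigma,\delta,\varepsilon}(u_\sigma + t(w - u_\sigma)) \ge 0$, equivalently testing the Euler--Lagrange equation of Remark~\ref{r2.1} against $w - u_\sigma$, yields
\[
2B(u_\sigma, w - u_\sigma) + \int_{\R^n} g_\sigma'(u_\sigma - \varphi)(w - u_\sigma)\dd x + f_\varepsilon'\!\left(\int_{\Omega^c} h_\delta(u_\sigma)\dd x\right) \int_{\Omega^c} h_\delta'(u_\sigma)(w - u_\sigma)\dd x = 0.
\]
Since $g_\sigma \equiv 0$ on $[0,\infty)$ and $g_\sigma' \le 0$, the density $g_\sigma'(u_\sigma - \varphi)$ is supported in $\{u_\sigma < \varphi\}$; on this set the constraint $w \ge \varphi$ forces $w - u_\sigma \ge \varphi - u_\sigma > 0$, and the middle integral is therefore non-positive. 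Dropping it and adding the non-negative quantity $2J(w - u_\sigma)$ to both sides, the purely algebraic identity
\[
2B(u, w-u) + 2J(w-u) = 2J(w) - 2B(w, u)
\]
(a direct expansion via the bilinearity of $B$) delivers \eqref{4.1} with $u_\sigma$ in place of $u_{\delta,\varepsilon}$.

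It remains to pass to the limit as $\sigma \to 0$. By Corollary~\ref{c3.1}, $u_\sigma \rightharpoonup u_{\delta,\varepsilon}$ in $H^\alpha(\R^n)$ and $u_\sigma \to u_{\delta,\varepsilon}$ locally uniformly, with the $L^\infty$-bound $0 \le u_\sigma \le \|\varphi\|_\infty$ from Proposition~\ref{p2.1}. The weak convergence suffices for $B(w, u_\sigma) \to B(w, u_{\delta,\varepsilon})$, since $B(w, \cdot)$ is a continuous linear functional on $H^\alpha$. For the remaining term, the uniform bounds $|h_\delta| \le 1$, $|h_\delta'| \le 1/\delta$, $|f_\varepsilon'| \le 1/\varepsilon$ combined with pointwise a.e.\ convergence (along a subsequence) and dominated convergence give $\int_{\Omega^c} h_\delta(u_\sigma) \to \int_{\Omega^c} h_\delta(u_{\delta,\varepsilon})$ and $\int_{\Omega^c} h_\delta'(u_\sigma)(w - u_\sigma) \to \int_{\Omega^c} h_\delta'(u_{\delta,\varepsilon})(w - u_{\delta,\varepsilon})$. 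The principal obstacle is the discontinuity of $h_\delta'$ at $\{0,\delta\}$ and of $f_\varepsilon'$ at $\gamma$: pointwise convergence of $h_\delta'(u_\sigma)$ can fail on $\{u_{\delta,\varepsilon} \in \{0,\delta\}\}$, and $f_\varepsilon'$ is multi-valued at $\gamma$. These are handled either by observing that the problematic level sets have measure zero (exploiting the H\"older regularity of Theorem~\ref{t3.1}) or, more robustly, by passing to a further subsequence and interpreting the limits of $f_\varepsilon'$ and $h_\delta'$ as elements of their subdifferentials in $[\varepsilon, 1/\varepsilon]$ and $[0, 1/\delta]$, which preserves the limiting inequality.
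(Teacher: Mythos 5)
Your proof is correct and follows essentially the same route as the paper's: start from the first-order optimality condition for $u_{\sigma,\delta,\varepsilon}$, eliminate the $g_\sigma'$ term, rewrite $2B(u_\sigma,w-u_\sigma)$ as $2J(w)-2B(w,u_\sigma)$ via the same algebraic manipulation (your identity $2B(u,w-u)+2J(w-u)=2J(w)-2B(w,u)$ is precisely the integrated form of the paper's pointwise inequality $A(B-A)\le B(B-A)$), and then let $\sigma\to0$.

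The one place you deviate is the treatment of the $g_\sigma'$ term. The paper uses the convexity of $g_\sigma$ (monotonicity of $g_\sigma'$) to estimate $\int g_\sigma'(u_\sigma-\varphi)(w-u_\sigma)\le\int g_\sigma'(w-\varphi)(w-u_\sigma)$ and then observes that $g_\sigma'(w-\varphi)\equiv0$ because $w\ge\varphi$. You instead argue directly that $g_\sigma'(u_\sigma-\varphi)$ is supported in $\{u_\sigma<\varphi\}$, where $w-u_\sigma\ge\varphi-u_\sigma>0$ and $g_\sigma'\le0$, so the integral is already non-positive and can be dropped. Both are sound; yours is slightly more economical since it uses only the sign and support of $g_\sigma'$, not convexity of $g_\sigma$. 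You are also more careful about the passage to the limit than the paper, which simply defers to \cite{TU17,Y16}: your observation that $h_\delta'$ is discontinuous at $\{0,\delta\}$ and $f_\varepsilon'$ is discontinuous at $\gamma$ is a real subtlety (the level set $\{u_{\delta,\varepsilon}=0\}$ need not have measure zero), and the fallback to subdifferential-valued limits is a reasonable way to make the argument rigorous, though it strays somewhat from the literal statement of the lemma.
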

\begin{proof}
	Since $u_{\sigma,\delta,\e}$ is a minimizer of $I_{\sigma,\delta,\e}$, the function
	$$
	F(t) := I_{\sigma,\delta,\e}(u_{\sigma,\delta,\e}+t(w-u_{\sigma,\delta,\e})),\,\,\,t\ge0
	$$
	has a minimum at $t=0$ and so $F'(0) \ge 0$. Thus,
	\[
	\begin{split}
	&\int_{\R^n}\int_{\R^n}\frac{\left(u_{\sigma,\delta,\varepsilon}(x)-u_{\sigma,\delta,\varepsilon}(y)\right)\left[w(x)-w(y)\right]}{|x-y|^{n+2s}}\,\mathrm{d}x\,\mathrm{d}y\\
	&-\int_{\R^n}\int_{\R^n}\frac{(u_{\sigma,\delta,\varepsilon}(x)-u_{\sigma,\delta,\varepsilon}(y))^2}{|x-y|^{n+2s}}\,\mathrm{d}x\,\mathrm{d}y\\
	&+\int_{\R^n}g_\sigma'(u_{\sigma,\delta,\e}-\varphi)(w-u_{\sigma,\delta,\varepsilon})\,dx\\
	&+f_\e'\left(\int_{\Omega^c}h_\delta(u_{\sigma,\delta,\e})\right)
	\int_{\Omega^c}h_\delta'(u_{\sigma,\delta,\e})(w-u_{\sigma,\delta,\e})\geq0,
	\end{split}
	\]
	which, by the monotonicity of $g_\sigma'$ (recall that $g_\sigma$ is convex) and the elementary inequality $A(B-A)\leq B(B-A)$ for any numbers $A$ and $B$, yields:
	\begin{equation}\label{4.2}
	\begin{split}
	J(w)&+\int_{\R^n}\int_{\R^n}\frac{[w(x)-w(y)][u_{\sigma,\delta,\varepsilon}(y)-u_{\sigma,\delta,\varepsilon}(x)]}{|x-y|^{n+2s}}\,\mathrm{d}x\,\mathrm{d}y\\
	&+\int_{\R^n}g_\sigma'(w-\varphi)(w-u_{\sigma,\delta,\varepsilon})\,dx\\
	&+	f_\e'\left(\int_{\Omega^c}h_\delta(u_{\sigma,\delta,\e})\right)
	\int_{\Omega^c}h_\delta'(u_{\sigma,\delta,\e})(w-u_{\sigma,\delta,\e})\geq0.
	\end{split}
	\end{equation}
	Since $w\geq\varphi$, then $g_\sigma'(w-\varphi)=0$. We can pass to the limit, as $\sigma\to0$, in the last term of \eqref{4.2}, as in \cite[Proof of Lemma 4.1]{TU17,Y16}. For the sake of completeness, we bring it here. Since
  \begin{eqnarray*}
    &&\left|\int_{\Omega^c}h_\delta'(u_{\sigma,\delta,\varepsilon})(w-u_{\sigma,\delta,\varepsilon})-\int_{\Omega^c}h_\delta'(u_{\delta,\varepsilon})(w-u_{\delta,\varepsilon})\right|\nonumber\\
    &\leq& \left|\int_{\Omega^c}h_\delta'(u_{\sigma,\delta,\varepsilon})(u_{\sigma,\delta,\varepsilon}-u_{\delta,\varepsilon})\right|
    +\left|\int_{\Omega^c}(h_\delta'(u_{\sigma,\delta,\varepsilon})-h_\delta'(u_{\delta,\varepsilon}))(w-u_{\delta,\varepsilon})\right|\nonumber \\
    &\leq& C(\delta)\|u_{\sigma,\delta,\varepsilon}-u_{\delta,\varepsilon}\|_{L^2}+\left|\int_{\Omega^c}(h_\delta'(u_{\sigma,\delta,\varepsilon})-h_\delta'(u_{\delta,\varepsilon}))(w-u_{\delta,\varepsilon})\right|\nonumber \\
    &=& o(1)+\left|\int_{\Omega^c}(h_\delta'(u_{\sigma,\delta,\varepsilon})-h_\delta'(u_{\delta,\varepsilon}))(w-u_{\delta,\varepsilon})\right|,
  \end{eqnarray*}
  then, if
  \begin{equation}\label{limit}
    \left|\int_{\Omega^c}(h_\delta'(u_{\sigma,\delta,\varepsilon})-h_\delta'(u_{\delta,\varepsilon}))(w-u_{\delta,\varepsilon})\right|\rightarrow0,
  \end{equation}
  as $\sigma\rightarrow0$, the result follows. Recalling \Cref{new proposition}, we remark that the integration in \eqref{limit} is, in fact, over a bounded domain. Furthermore, note that $(h_\delta'(u_{\sigma,\delta,\varepsilon})-h_\delta'(u_{\delta,\varepsilon}))$ is bounded, and $(u-w)\in L^2$, therefore, \eqref{limit} follows. 
\end{proof}
\begin{corollary}\label{c4.1}
	The function $u_{\delta,\varepsilon}$ satisfies (in the weak sense)
	\begin{equation}\label{4.3}	
    (-\Delta)^su_{\delta,\varepsilon}=f_\e'\left(\int_{\Omega^c}h_\delta(u_{\delta,\e}) \right)h_\delta'(u_{\delta,\e})\chi_{\Omega^c}
	\end{equation} 
	in $\{u_{\delta,\e}>\varphi\}$ and
	\begin{equation}\label{4.4}
	-C \left(C_\varphi
	+\frac{1}{\varepsilon\delta}\right)\le(-\Delta)^su_{\delta,\varepsilon} \le  f_\e'\left(\int_{\Omega^c}h_\delta(u_{\delta,\e})\right)h_\delta'(u_{\delta,\e})\chi_{\Omega^c}.  
	\end{equation}
\end{corollary}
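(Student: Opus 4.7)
The proof splits naturally into establishing the pointwise equation \eqref{4.3} on the open set $\{u_{\delta,\varepsilon}>\varphi\}$, and the two-sided bounds \eqref{4.4} on all of $\mathbb{R}^n$. I would handle them separately.

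\emph{Equation on $\{u_{\delta,\varepsilon}>\varphi\}$.} By Theorem \ref{t3.1} (passed to the limit via Corollary \ref{c3.1}), the function $u_{\delta,\varepsilon}$ is continuous, and since $\varphi$ is smooth the set $\{u_{\delta,\varepsilon}>\varphi\}$ is open. Fix any $\psi\in C_c^\infty(\{u_{\delta,\varepsilon}>\varphi\})$. By compactness of $\supp\psi$ there exists $c>0$ such that $u_{\delta,\varepsilon}-\varphi\ge c$ on $\supp\psi$, and hence for every real $t$ with $|t|\le c/\|\psi\|_\infty$ the function $w_t := u_{\delta,\varepsilon}+t\psi$ satisfies $w_t\ge\varphi$ and is admissible in Lemma \ref{l4.1}. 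Substituting $w_t$ into \eqref{4.1} and expanding $J(u_{\delta,\varepsilon}+t\psi)=J(u_{\delta,\varepsilon})+2t[u_{\delta,\varepsilon},\psi]+t^2 J(\psi)$, where $[v,w]$ denotes the Gagliardo bilinear form, the terms independent of $t$ cancel, leaving
$$t\!\left(2[u_{\delta,\varepsilon},\psi]+f'_\varepsilon\!\left(\int_{\Omega^c}h_\delta(u_{\delta,\varepsilon})\right)\!\int_{\Omega^c}h'_\delta(u_{\delta,\varepsilon})\psi\right)+2t^2 J(\psi)\ge 0.$$
Dividing by $t>0$ and letting $t\to 0^+$ yields one sign of the linear coefficient, and doing the same with $t<0$ yields the opposite one. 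Hence equality of the linear coefficient holds, which, via the standard duality $2[u,\psi]\leftrightarrow\int\psi\cdot 2(-\Delta)^\alpha u$ (consistent with the normalization adopted in Remark \ref{r2.1}), is exactly \eqref{4.3} in the distributional sense on $\{u_{\delta,\varepsilon}>\varphi\}$.

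\emph{Two-sided bounds.} These are obtained by passing to the limit $\sigma\to 0$ in the Euler--Lagrange equation of Remark \ref{r2.1}. Since $g_\sigma$ is decreasing, $g'_\sigma\le 0$, and Lemma \ref{l3.1} supplies the uniform lower bound $g'_\sigma(u_{\sigma,\delta,\varepsilon}-\varphi)\ge -C(C(\varphi)+1/(\varepsilon\delta))$. As $f'_\varepsilon\ge 0$ and $h'_\delta\ge 0$, the Euler--Lagrange equation sandwiches $2(-\Delta)^\alpha u_{\sigma,\delta,\varepsilon}$ between the two sides of \eqref{4.4} for every $\sigma\in(0,1)$. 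The left inequality transfers to $u_{\delta,\varepsilon}$ immediately through the distributional convergence $(-\Delta)^\alpha u_{\sigma,\delta,\varepsilon}\rightharpoonup(-\Delta)^\alpha u_{\delta,\varepsilon}$ that follows from the locally uniform convergence of Corollary \ref{c3.1}. On the right side, continuity of $h_\delta$ and the uniform convergence of $u_{\sigma,\delta,\varepsilon}$ yield $f'_\varepsilon(\int_{\Omega^c}h_\delta(u_{\sigma,\delta,\varepsilon}))\to f'_\varepsilon(\int_{\Omega^c}h_\delta(u_{\delta,\varepsilon}))$.

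\emph{Main obstacle.} The delicate point is the passage of $h'_\delta(u_{\sigma,\delta,\varepsilon})$ to $h'_\delta(u_{\delta,\varepsilon})$ in the upper bound, because $h'_\delta$ is discontinuous (equal to $1/\delta$ on $(0,\delta)$ and zero elsewhere). I would handle this by pairing against a non-negative $\psi\in C_c^\infty(\R^n)$, using the uniform bound $h'_\delta\le 1/\delta$ and dominated convergence: pointwise a.e.\ convergence of $h'_\delta(u_{\sigma,\delta,\varepsilon})$ to $h'_\delta(u_{\delta,\varepsilon})$ holds off the level sets $\{u_{\delta,\varepsilon}\in\{0,\delta\}\}$, which are Lebesgue-negligible for the H\"older continuous function $u_{\delta,\varepsilon}$ (and can in any event be avoided by perturbing $\delta$). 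This transfers the inequality to the limit and yields \eqref{4.4} in the distributional sense.
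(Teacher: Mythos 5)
Your decomposition of the statement is fine, and your derivation of \eqref{4.3} from a two-sided variation in Lemma~\ref{l4.1} is correct and clean (the paper just reads it off from Remark~\ref{r2.1} by passing $\sigma\to0$ on the open set $\{u_{\delta,\varepsilon}>\varphi\}$, where $g'_\sigma(u_{\sigma,\delta,\varepsilon}-\varphi)\equiv0$ for small $\sigma$). Your proof of the lower bound in \eqref{4.4} is the same as the paper's. The divergence is in the upper bound: the paper obtains it from the variational inequality \eqref{4.1} of Lemma~\ref{l4.1} with a one-sided variation $w=u_{\delta,\varepsilon}+t\psi$, $\psi\ge0$, $t\to0^+$, which is already phrased entirely in terms of $u_{\delta,\varepsilon}$ (the delicate $\sigma$-limit of the $h'_\delta$ term has been disposed of once and for all in the proof of that lemma). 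You instead try to pass the right-hand side of the $\sigma$-level Euler--Lagrange equation to the limit directly, and this is where the gap is.

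The step that fails is your justification that $\{u_{\delta,\varepsilon}\in\{0,\delta\}\}$ is ``Lebesgue-negligible for the H\"older continuous function $u_{\delta,\varepsilon}$.'' H\"older regularity places no constraint on the size of level sets (constants are H\"older), and the set $\{u_{\delta,\varepsilon}=0\}$ is \emph{not} small here: since $u_{\delta,\varepsilon}\ge0$ belongs to $H^\alpha(\R^n)$ and is bounded, it decays at infinity, and $\{u_{\delta,\varepsilon}=0\}$ has positive --- indeed infinite --- measure, most of it inside $\Omega^c$, which is precisely where the factor $h'_\delta(\cdot)\chi_{\Omega^c}$ is applied. So the set you brush aside is exactly the one your dominated-convergence argument must cover, and ``perturbing $\delta$'' does nothing for the jump of $h'_\delta$ at $0$. (A similar issue lurks in your claim that $f'_\varepsilon\bigl(\int_{\Omega^c}h_\delta(u_{\sigma,\delta,\varepsilon})\bigr)\to f'_\varepsilon\bigl(\int_{\Omega^c}h_\delta(u_{\delta,\varepsilon})\bigr)$, since $f'_\varepsilon$ jumps at $\gamma$.) The argument can be rescued by fixing the upper-semicontinuous convention $h'_\delta(0)=h'_\delta(\delta)=1/\delta$ and using $0\le u_{\sigma,\delta,\varepsilon}\le\|\varphi\|_\infty$, which gives $h'_\delta(u_{\sigma,\delta,\varepsilon})=1/\delta=h'_\delta(u_{\delta,\varepsilon})$ on $\{u_{\delta,\varepsilon}=0\}$ for small $\sigma$, and a reverse-Fatou estimate on $\{u_{\delta,\varepsilon}=\delta\}$; but your written reasoning does not reach this, and the paper avoids the issue altogether by routing the upper bound through Lemma~\ref{l4.1}.
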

\begin{proof}
    \Cref{r2.1} provides \eqref{4.3}. The second inequality of \eqref{4.4} follows from \eqref{4.1}; the first one from \eqref{3.1}, using \eqref{EL} and \Cref{c3.1}.
\end{proof}
To pass to the limit, as $\delta\to0$, we need uniform in $\delta$ estimates. Observe that since we are dealing with a non-local operator, we cannot expect higher regularity, as in \cite{Y16,TU17}. A way to bypass the issue would be using the flatness improvement technique, as in \cite{DT19,PT16}. Here we extrapolate the idea used in \cite{Y16} to the fractional framework, paving the way to the regularity of solutions.
\begin{theorem}\label{t4.1}
	There exists $C>0$ constant, depending only on $n$, $s$, but not on $\delta$, such that
	$$
	\|u_{\delta,\varepsilon}\|_{C^{0,s}}\leq C\left((1+\delta)C_\varphi+	\frac{1}{\varepsilon}\right).
	$$
\end{theorem}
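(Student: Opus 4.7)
The plan is to eliminate the $\frac{1}{\delta}$ singularity present in Theorem \ref{t3.1}. The key structural observation is that the singular upper bound in \eqref{4.4} is of order $\frac{1}{\varepsilon\delta}$ but is supported only on the transition layer
$$
T_\delta := \{0 \le u_{\delta,\varepsilon} \le \delta\} \cap \Omega^c,
$$
on which $u_{\delta,\varepsilon}$ itself is at most $\delta$. This ``source times size of solution'' balance should produce the $(1+\delta)$ factor in the statement rather than $\frac{1}{\delta}$, and the exponent $\alpha$ (as opposed to a general $\lambda < 2\alpha$) is precisely the critical gain-of-regularity consistent with this scaling.

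I would split $u_{\delta,\varepsilon} = v_\delta + w_\delta$, where $v_\delta$ absorbs the singular right-hand side: take $v_\delta$ to solve
$$
(-\Delta)^\alpha v_\delta = \tfrac{1}{2} f_\varepsilon'\!\!\left(\int_{\Omega^c} h_\delta(u_{\delta,\varepsilon})\right) h_\delta'(u_{\delta,\varepsilon}) \chi_{\Omega^c} \quad \text{in } \R^n,
$$
with $v_\delta \to 0$ at infinity. The residual $w_\delta$ then satisfies an equation whose right-hand side, by \eqref{4.4} and by construction, is bounded uniformly in $\delta$ by $C\bigl(C(\varphi) + \frac{1}{\varepsilon}\bigr)$: the singular contribution has been removed, and only the lower bound in \eqref{4.4}, together with the smoothness of $\varphi$, remains. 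Proposition \ref{prop:silv-regularity} applied to $w_\delta$ (with $\lambda = \alpha$ in either regime of $2\alpha$) yields the desired uniform Hölder bound $\|w_\delta\|_{C^{0,\alpha}(\R^n)} \le C\bigl(C(\varphi) + \frac{1}{\varepsilon}\bigr)$.

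The remaining task is to control $v_\delta$. A comparison/barrier argument, using the fact that the source is concentrated where $u_{\delta,\varepsilon}$ itself is of order $\delta$, should give $\|v_\delta\|_\infty \le C\delta/\varepsilon$, essentially because $u_{\delta,\varepsilon}$ itself serves as a pointwise barrier on the support of the source modulo $O(\delta)$. Combining this $L^\infty$ bound of order $\frac{\delta}{\varepsilon}$ with the pointwise $L^\infty$ bound $\frac{1}{\varepsilon\delta}$ on $(-\Delta)^\alpha v_\delta$, an interpolation internal to the proof of Proposition \ref{prop:silv-regularity} produces a Hölder estimate for $v_\delta$ of order $\delta\bigl(C(\varphi) + \frac{1}{\varepsilon}\bigr)$. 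Adding the two contributions yields the claim, and passing to $\sigma \to 0$ (as in Corollary \ref{c3.1}) preserves the bound by lower semicontinuity of the Hölder seminorm.

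The main obstacle is making rigorous the bound $\|v_\delta\|_\infty \lesssim \delta/\varepsilon$, which does \emph{not} follow from the pointwise size $\frac{1}{\varepsilon\delta}$ of the source alone: one has to exploit that this source is concentrated on the thin layer $T_\delta$ and that $u_{\delta,\varepsilon}$ itself is of size $\delta$ there. A cleaner, more robust alternative would be to bypass the decomposition and work directly with the variational inequality \eqref{4.1}, testing against truncations of the form $(u_{\delta,\varepsilon} - k)_+$ and running a De Giorgi-type iteration, which naturally produces the critical exponent $\alpha$ and tracks the dependence on $C(\varphi)$ and $\frac{1}{\varepsilon}$ explicitly, with any residual $\delta$-dependence appearing multiplicatively through $C(\varphi)$.
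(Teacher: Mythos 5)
Your argument takes a genuinely different route from the paper's. The paper's proof is a single dilation: set $v(x):=u_{\delta,\varepsilon}(\delta^{\beta}x)$ with $\beta$ chosen so that the scaling identity $(-\Delta)^\alpha v(x)=\delta\,(-\Delta)^\alpha u_{\delta,\varepsilon}(\delta^{\beta}x)$ holds; then \eqref{4.4}, together with $|f'_\varepsilon|\le1/\varepsilon$ and $|h'_\delta|\le1/\delta$, gives $|(-\Delta)^\alpha v|\le C\big(\delta C(\varphi)+\tfrac{1}{\varepsilon}\big)$, and Propositions \ref{prop:silv-regularity} and \ref{p2.1} are invoked for $v$. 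There is no decomposition, no auxiliary equation, and no barrier argument. Your split $u_{\delta,\varepsilon}=v_\delta+w_\delta$, with $v_\delta$ solving a whole-space problem and the remainder handled by interpolation or De~Giorgi iteration, is a different strategy.

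Beyond being different, your proposal has a concrete gap that you did not flag, and it sits at the heart of the argument. You assert that after subtracting $v_\delta$ the residual $w_\delta$ satisfies $|(-\Delta)^\alpha w_\delta|\le C\big(C(\varphi)+\tfrac{1}{\varepsilon}\big)$, so that ``the singular contribution has been removed.'' But the source you assign to $v_\delta$, namely $\tfrac12 f'_\varepsilon h'_\delta\chi_{\Omega^c}$, is supported in $\Omega^c$, so $(-\Delta)^\alpha v_\delta\equiv0$ on $\Omega$. Hence on $\Omega$ one has $(-\Delta)^\alpha w_\delta=(-\Delta)^\alpha u_{\delta,\varepsilon}$, and the lower bound in \eqref{4.4} — which Corollary \ref{c4.1} traces back to the estimate $\|g'_\sigma(u_{\sigma,\delta,\varepsilon}-\varphi)\|_\infty\le C\big(C(\varphi)+\tfrac{1}{\varepsilon\delta}\big)$ of \eqref{3.1} — is still of order $\tfrac{1}{\varepsilon\delta}$. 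Your decomposition cancels the singular $\Omega^c$-source appearing in the upper bound of \eqref{4.4} but leaves the singular $\Omega$-source in the lower bound untouched: the quantity $g'_\sigma(u-\varphi)$ lives in $\Omega$, precisely where your $v_\delta$ does nothing. Applying Proposition \ref{prop:silv-regularity} to $w_\delta$ would therefore give nothing better than Theorem \ref{t3.1}. You do flag the other difficulty, that $\|v_\delta\|_\infty\lesssim\delta/\varepsilon$ does not follow from the pointwise size of the source alone; but the $w_\delta$ step above is the more serious one, since it would persist even if the $v_\delta$ bound were established.
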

\begin{proof}
	We consider three cases.\\
   
	\textsc{Case 1.} Let $u_{\delta,\varepsilon}(x_0)\le\delta$, and define
	$$
	v(y):=\frac{1}{\delta}u_{\delta,\varepsilon}\left(x_0+\delta^{\frac{1}{s}}y\right).
	$$
	Recalling 
	$$
	0<f_\varepsilon'\le\frac{1}{\varepsilon}\quad\text{ and }\quad 0\le h_\delta'\le\frac{1}{\delta},
	$$
    from \eqref{4.4} we obtain
	$$
	-C\left(\delta C_\varphi+\frac{1}{\varepsilon}\right)\le(-\Delta)^s v\le\frac{1}{\varepsilon}.
	$$
	On the other hand, $v(0)\le1$, therefore, 
	the Harnack inequality for the fractional Laplacian, \cite{CS07}, on compact subsets of $\R^n$ provides
	$$
	\|v\|_\infty\le C\left(\delta C_\varphi+\frac{1}{\varepsilon}+1\right),
	$$
	where the constant $C>0$ does not depend on $\delta$.  \Cref{prop:silv-regularity} (see also \Cref{r2.3}) then yields
	$$	\left[u_{\delta,\varepsilon}\right]_{C^{0,s}}=\left[ v\right]_{C^{0,s}}\le C\left(\delta C_\varphi+\frac{1}{\varepsilon}+1\right).
	$$
    \Cref{p2.1} then implies that
    $$    \|u_{\delta,\varepsilon}\|_{C^{0,s}}\le C\left((1+\delta) C_\varphi+\frac{1}{\varepsilon}\right).
    $$	
	\textsc{Case 2.} Let now $u_{\delta,\varepsilon}(x_0)=\varphi(x_0)$, and define
	$$
	v(y):=\frac{1}{\delta}\left[u_{\delta,\varepsilon}\left(x_0+\delta^{\frac{1}{s}}y\right)-\varphi\left(x_0+\delta^{\frac{1}{s}}y\right)\right].
	$$
	Once more, using \eqref{4.4}, we have
	$$
	-C\left(\delta C_\varphi+\frac{1}{\varepsilon}\right)\le(-\Delta)^s v\le\frac{1}{\varepsilon}+\delta C_\varphi.
	$$
	Since $v\ge0$ and $v(0)=0$, as above on the compact subsets of $\R^n$ one has
	$$
	\|v\|_\infty\le C\left(\delta C_\varphi+\frac{1}{\varepsilon}\right),
	$$
	where the constant $C>0$ does not depend on $\delta$. \Cref{prop:silv-regularity} now implies
	$$
	\|v\|_{C^{0,s}}\le C\left(\delta C_\varphi+\frac{1}{\varepsilon}\right),
	$$
	and therefore, again recalling \Cref{p2.1},
	$$
	\|u_{\delta,\varepsilon}\|_{C^{0,s}}\le C\left(\delta C_\varphi+\frac{1}{\varepsilon}\right)+C_\varphi.
	$$	
	\textsc{Case 3.} Finally, let $u_{\delta,\varepsilon}(x_0)>\max\{\varphi(x_0),\delta\}$. Set
	$$
	d:=\dist\big(x_0,\partial(\{u_{\delta,\varepsilon}>\varphi\}\cap\{u_{\delta,\varepsilon}>\delta\})\big)
	$$
	and let $z$ be a point where the distance is attained, i.e., $|z-x_0|=d$. Thus, either $u_{\delta,\varepsilon}(z)=\delta$ or $u_{\delta,\varepsilon}(z)=\varphi(z)$. Assume, for a moment, that $u_{\delta,\varepsilon}(z)=\delta$. Then the function
	$$
	v(y):=\frac{1}{d}\left(u_{\delta,\varepsilon}(x_0+d^{\frac{1}{s}}y)-\delta\right)
	$$
	is non-negative in $B_\rho$, where $\rho:=d^{1-\frac{1}{s}}$. It is also $s$-harmonic in $B_\rho$, as $h'_\delta=0$ in \eqref{4.3}. Additionally, $v(\tilde{z})=0$ for a point $\tilde{z}\in\partial B_\rho$. Then arguing as in Case 1, in $B_{\rho}$ we obtain
	$$
	\|v\|_\infty\le C\left(\delta C_\varphi+\frac{1}{\varepsilon}+1\right),
	$$
	and  \Cref{prop:silv-regularity} makes sure that
	$$	\left[u_{\delta,\varepsilon}\right]_{C^{0,s}}=\left[v\right]_{C^{0,s}}\le C\left(\delta C_\varphi+\frac{1}{\varepsilon}+1\right).
	$$
	If $u_{\delta,\varepsilon}(z)=\varphi(z)$, then the function
	$$
	w(y):=\frac{1}{d}\left(u_{\delta,\varepsilon}(x_0+d^{\frac{1}{s}}y)-\varphi(x_0+d^{\frac{1}{s}}y)\right)
	$$
	is non-negative and $s$-harmonic in $B_\rho$. Moreover, $w(y_0)=0$, for some $y_0\in\partial B_\rho$, where the estimate holds, as seen in Case 2. Arguing as above, we obtain the desired estimate also in this case. This completes the proof.	
\end{proof}
As a consequence of \Cref{p2.1}, the Arzel\`a-Ascoli Theorem, and \Cref{t4.1}, we obtain the next result.
\begin{corollary}\label{c4.2}
	If $u_{\sigma,\delta,\varepsilon}$ is a minimizer of $I_{\sigma,\delta,\varepsilon}$, then $u_{\sigma,\delta,\varepsilon}$ converges weakly (up to a subsequence as $\sigma,\delta\rightarrow0$) in $H^s(\R^n)$ to a function $u_\varepsilon$. This convergence is locally uniform. Moreover, there exists a constant $C>0$ such that
	$$
	\|u_\varepsilon\|_{C^{0,s}}\le C\left(C_\varphi+	\frac{1}{\varepsilon}\right).
	$$
\end{corollary}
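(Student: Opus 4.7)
The plan is to iterate the limiting procedure of Corollary \ref{c3.1} in the remaining parameter $\delta$. First I would fix $\delta,\varepsilon \in (0,1)$ and invoke Corollary \ref{c3.1} to obtain, as $\sigma \to 0$ along a subsequence, a function $u_{\delta,\varepsilon} \in H^\alpha(\R^n)$ with $u_{\delta,\varepsilon} \ge \varphi$, together with locally uniform convergence $u_{\sigma,\delta,\varepsilon} \to u_{\delta,\varepsilon}$ and weak convergence in $H^\alpha(\R^n)$.

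The key observation is that Theorem \ref{t4.1} yields a H\"older bound of the form
\[
\|u_{\delta,\varepsilon}\|_{C^{0,\alpha}(\R^n)} \le C\left((1+\delta)\, C(\varphi)+\frac{1}{\varepsilon}\right) \le C\left(C(\varphi)+\frac{1}{\varepsilon}\right),
\]
since $\delta \in (0,1)$. Combined with the bound $0 \le u_{\delta,\varepsilon} \le \|\varphi\|_\infty$ inherited from Proposition \ref{p2.1} via the locally uniform convergence, the family $\{u_{\delta,\varepsilon}\}_{\delta\in(0,1)}$ is uniformly bounded in $L^\infty(\R^n)\cap C^{0,\alpha}(\R^n)$.

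Next I would apply the Arzel\`a-Ascoli theorem on an exhausting sequence of compact sets: equicontinuity follows from the uniform H\"older bound and pointwise control from the $L^\infty$ bound. A standard diagonal extraction produces a subsequence $\delta_k \to 0$ along which $u_{\delta_k,\varepsilon} \to u_\varepsilon$ locally uniformly in $\R^n$, and the limit $u_\varepsilon$ inherits the H\"older estimate by lower semicontinuity of the $C^{0,\alpha}$ seminorm under locally uniform convergence. Weak $H^\alpha$ convergence is obtained by a further subsequence extraction using the uniform $H^\alpha$ bound coming from the proof of Proposition \ref{p2.1}. Finally, a diagonal selection in $(\sigma,\delta)$ allows one to choose $\sigma_k,\delta_k \to 0$ simultaneously so that the original minimizers $u_{\sigma_k,\delta_k,\varepsilon}$ converge to $u_\varepsilon$ both locally uniformly and weakly in $H^\alpha(\R^n)$.

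The main technical point to verify carefully is that the $H^\alpha(\R^n)$ bound from the proof of Proposition \ref{p2.1} is genuinely uniform across both parameters $\sigma$ and $\delta$ (the $L^2$ part in particular), so that a single weakly convergent subsequence can be extracted for the doubly-indexed family rather than only for each fixed $\delta$ separately. Once this bookkeeping is in place, everything else is a routine combination of Theorem \ref{t4.1}, Proposition \ref{p2.1}, Arzel\`a-Ascoli and a diagonal argument.
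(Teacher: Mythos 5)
Your proposal is correct and follows exactly the route the paper indicates: the paper gives no explicit proof for this corollary beyond the one-line remark that it is ``a consequence of Proposition \ref{p2.1}, the Arzel\`a-Ascoli Theorem and Theorem \ref{t4.1},'' and your argument (uniform-in-$\delta$ H\"older bound from Theorem \ref{t4.1} since $1+\delta<2$, the $L^\infty$ bound from Proposition \ref{p2.1}, Arzel\`a--Ascoli with a diagonal extraction, lower semicontinuity of the $C^{0,\alpha}$ seminorm, plus a further diagonal in $(\sigma,\delta)$ for the weak $H^\alpha$ limit) is precisely the fleshed-out version of that remark. The caveat you raise about the uniformity of the $L^2$ part of the $H^\alpha$ bound is a fair point, but the paper glosses over it in the same way.
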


\smallskip

\section{Back to the original problem}\label{s5}

Here we show that the function $u_\varepsilon$ from \Cref{c4.2} is a minimizer for a certain functional. This, in turn, provides information on the regularity of the exterior and interior free boundaries. Furthermore, we show that for $\varepsilon>0$ small enough (but fixed), the desired volume is attained automatically, which means that solutions of the penalized problems turn into solutions to our original problem inheriting all the properties.
\begin{theorem}\label{t5.1}
	The function $u_\varepsilon$ from \Cref{c4.2} is a minimizer of
	$$
	J_\varepsilon(u):=J(u)+
	f_\varepsilon(|\{u>0\}\setminus\Omega|)
	$$
	over the functions in $H^s(\R^n)$ that lie above $\varphi$. Here $J(u)$ is defined by \eqref{Jdef}.
\end{theorem}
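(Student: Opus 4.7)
The plan is to pass to the limit, as $\sigma,\delta\to0$, in the minimality inequality satisfied by $u_{\sigma,\delta,\varepsilon}$. Let $w\in H^\alpha(\R^n)$ with $w\ge\varphi$ be a competitor that agrees with $u_\varepsilon$ outside some compact set $K\subset\R^n$; this is the ``local'' qualifier in the statement. Since $u_{\sigma,\delta,\varepsilon}$ minimizes $I_{\sigma,\delta,\varepsilon}$ and $w$ is admissible, we have $I_{\sigma,\delta,\varepsilon}(u_{\sigma,\delta,\varepsilon})\le I_{\sigma,\delta,\varepsilon}(w)$. The condition $w\ge\varphi$ forces $g_\sigma(w-\varphi)=0$, while $g_\sigma(u_{\sigma,\delta,\varepsilon}-\varphi)\ge0$ may be dropped from the left-hand side, yielding
\[
J(u_{\sigma,\delta,\varepsilon})+f_\varepsilon\!\left(\int_{\Omega^c}h_\delta(u_{\sigma,\delta,\varepsilon})\right)\le J(w)+f_\varepsilon\!\left(\int_{\Omega^c}h_\delta(w)\right).
\]

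Next, I would handle the limit of each term separately. The Gagliardo seminorm $J$ is weakly lower semicontinuous in $H^\alpha(\R^n)$, so by Corollary \ref{c4.2} one has $J(u_\varepsilon)\le\liminf J(u_{\sigma,\delta,\varepsilon})$. For the right-hand penalty, note that $h_\delta(t)\to\chi_{(0,\infty)}(t)$ pointwise as $\delta\to0$ and $0\le h_\delta(w)\le 1$; since $w=u_\varepsilon$ outside the compact set $K$, on $K\cap\Omega^c$ dominated convergence gives $\int_{\Omega^c}h_\delta(w)\to|\{w>0\}\setminus\Omega|$ (the case $|\{w>0\}\setminus\Omega|=\infty$ makes the inequality vacuous), and then continuity of $f_\varepsilon$ yields convergence to $J_\varepsilon(w)$. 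For the left-hand penalty, local uniform convergence $u_{\sigma,\delta,\varepsilon}\to u_\varepsilon$ implies that at any point $x$ with $u_\varepsilon(x)>0$, eventually $u_{\sigma,\delta,\varepsilon}(x)>\delta$, so $h_\delta(u_{\sigma,\delta,\varepsilon}(x))\to1$; Fatou's lemma therefore gives
\[
\liminf_{\sigma,\delta\to0}\int_{\Omega^c}h_\delta(u_{\sigma,\delta,\varepsilon})\ge|\{u_\varepsilon>0\}\setminus\Omega|,
\]
and monotonicity and continuity of $f_\varepsilon$ transfer this to a lower bound on $\liminf f_\varepsilon(\cdot)$. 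Combining with $\liminf(a_k+b_k)\ge\liminf a_k+\liminf b_k$ produces $J_\varepsilon(u_\varepsilon)\le J_\varepsilon(w)$.

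The main obstacle is the careful passage to the limit in the penalty term, which is an integral over the unbounded set $\Omega^c$. The local-minimizer formulation is precisely what tames this: restricting to competitors $w$ that coincide with $u_\varepsilon$ outside a compact set $K$ reduces all the delicate convergence questions to the bounded region $K\cap\Omega^c$, where dominated convergence applies without trouble. A secondary subtlety is the behaviour on the level set $\{u_\varepsilon=0\}\cap\Omega^c$: there Fatou only supplies a \emph{lower} bound on the liminf, but since this lower bound is placed on the left-hand side of the minimality inequality (i.e. on $J_\varepsilon(u_\varepsilon)$), the direction is exactly the one needed, and no additional argument is required.
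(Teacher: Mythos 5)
Your argument is correct, but it takes a genuinely different route from the paper. The paper proves Theorem~\ref{t5.1} by contradiction: it assumes $\inf J_\varepsilon < J_\varepsilon(u_\varepsilon)$, picks a strictly better competitor $v$, truncates all the relevant integrals to a large ball $B_r$ so that the tails of the Gagliardo seminorm and of the volume term are uniformly small, and then threads a chain of Fatou-type inequalities through the $\delta$- and $\sigma$-limits to land on $I_{\sigma,\delta,\varepsilon}(v) < I_{\sigma,\delta,\varepsilon}(u_{\sigma,\delta,\varepsilon})$, contradicting minimality of $u_{\sigma,\delta,\varepsilon}$. You instead go forward: starting from $I_{\sigma,\delta,\varepsilon}(u_{\sigma,\delta,\varepsilon})\le I_{\sigma,\delta,\varepsilon}(w)$, you drop the nonnegative $g_\sigma$ term on the left, kill it on the right via $w\ge\varphi$, and then pass to the limit termwise: weak lower semicontinuity of $J$, Fatou plus monotonicity of $f_\varepsilon$ for the volume penalty of $u_{\sigma,\delta,\varepsilon}$, and monotone (or dominated) convergence of $h_\delta(w)\uparrow\chi_{\{w>0\}}$ for the right-hand penalty. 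This avoids both the contradiction framework and the spatial truncation to $B_r$, and is cleaner for it. One small remark: the restriction that $w$ agree with $u_\varepsilon$ outside a compact set is not actually what tames the unbounded integral over $\Omega^c$ --- Fatou on the left and the monotone convergence $h_\delta(w)\uparrow\chi_{\{w>0\}}$ on the right both work over unbounded domains, and the only genuinely problematic case, $|\{w>0\}\setminus\Omega|=\infty$, is vacuous as you note. So your argument in fact yields global minimality over $\{w\in H^\alpha:w\ge\varphi\}$, which is slightly more than the stated ``local'' conclusion and is also what the paper's proof gives.
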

\begin{proof}
	We argue by contradiction and assume that $\inf J_{\e}<J_{\e}(u_\e)$. Hence, for given $\theta>0$ there exists $v\in H^s(\R^n)$ with $v\geq\varphi$ such that $J_{\e}(v)<J_{\e}(u_\e)-2\theta$. Since $J(u_\varepsilon)$ and $J(v)$ are finite, then 
    \[ \int_{B_r^c}\int_{B_r^c}\frac{|u_\e(x)-u_\e(y)|^2-|v(x)-v(y)|^2}{|x-y|^{n+2s}}\,\mathrm{d}x\,\mathrm{d}y<\frac{\theta}{2},\]
	when $r>0$ is big enough. Also, we note that both $\{u_\varepsilon>0\}\setminus\Omega$ and $\{v>0\}\setminus\Omega$ have finite measure, since otherwise $f_\e$ would be infinity on the corresponding function. For $r>0$ sufficiently big,
	the sets $\{u_\varepsilon>0\}\cap B_r^c\setminus\Omega$ and $\{v>0\}\cap B_r^c\setminus\Omega$ have arbitrarily small volume. The continuity of $f_\e$ implies
	\[ |f_\e\left(|\{u_\varepsilon>0\}\cap B_r^c\setminus\Omega|\right)-f_\e\left(|\{v>0\}\cap B_r^c\setminus\Omega|\right)|<\frac{\theta}{2}. \]
	Therefore, 
	\begin{equation}\label{5.1}
	\begin{split}
	&\int_{B_r}\int_{B_{r}}\frac{|v(x)-v(y)|^{2}}{|x-y|^{n+2s}}\,\mathrm{d}x\,\mathrm{d}y+f_\varepsilon\left(\int_{\Omega^c\cap B_r}\chi_{\{v>0\}}\right) \\
	&<\int_{B_r}\int_{B_{r}}\frac{|u_\e(x)-u_\e(y)|^{2}}{|x-y|^{n+2s}}\,\mathrm{d}x\,\mathrm{d}y+f_\varepsilon\left(\int_{\Omega^c\cap B_r}\chi_{\{u_\varepsilon>0\}}\right)-\theta.
	\end{split}
	\end{equation}
	Since $h_\delta(v)\rightarrow\chi_{\{v>0\}}$, as $\delta\rightarrow0$, and $g_\sigma(v-\varphi)=0$, dominated convergence theorem yields
	\begin{equation}
	\begin{split}
	&\int_{B_r}\int_{B_{r}}\frac{|v(x)-v(y)|^{2}}{|x-y|^{n+2s}}\,\mathrm{d}x\,\mathrm{d}y+f_\varepsilon\left(\int_{\Omega^c\cap B_r}\chi_{\{v>0\}}\right) \\
	&=\int_{B_r}\int_{B_{r}}\frac{|v(x)-v(y)|^{2}}{|x-y|^{n+2s}}\,\mathrm{d}x\,\mathrm{d}y+\int_\Omega g_\sigma(v-\varphi) \\
    & \quad  +\lim_{\delta\rightarrow0}f_\varepsilon\left(\int_{\Omega^c\cap B_r}h_\delta(v)\right).
	\end{split}
	\end{equation}
	Note that if $\tau>0$ is small, then $h_\delta(u_{\delta,\varepsilon})=\chi_{\{u_{\delta,\varepsilon}>0\}}$ on $\{u_\varepsilon>\tau\}$ for $\delta$ small. Denoting $\Gamma:=\Omega^c\cap B_r\cap\{u_\varepsilon\geq\tau\}$, recalling that by definition $f_
    \varepsilon$ is a continuous monotone function, and using Fatou lemma, we estimate
	\begin{equation}\label{5.3}
	\begin{split}
	&\int_{B_r}\int_{B_{r}}\frac{|u_\e(x)-u_\e(y)|^{2}}{|x-y|^{n+2s}}\,\mathrm{d}x\,\mathrm{d}y+f_\varepsilon\left(\int_{\Omega^c\cap B_r}\chi_{\{u_\varepsilon>0\}}\right)-\theta\\
	&\leq\int_{B_r}\int_{B_{r}}\frac{|u_\e(x)-u_\e(y)|^{2}}{|x-y|^{n+2s}}\,\mathrm{d}x\,\mathrm{d}y+f_\varepsilon\left(\int_{\Gamma}\chi_{\{u_\varepsilon>0\}}\right)-\frac{\theta}{2}\\	&\leq\liminf_{\delta\rightarrow0}\left[\int_{B_r}\int_{B_{r}}\frac{|u_{\delta,\e}(x)-u_{\delta,\e}(y)|^{2}}{|x-y|^{n+2s}}\,\mathrm{d}x\,\mathrm{d}y+f_\varepsilon\left(\int_{\Gamma}\chi_{\{u_{\delta,\varepsilon}>0\}}\right)\right]-\frac{\theta}{2}\\
	&\le\liminf_{\delta\rightarrow0}\left[\int_{B_r}\int_{B_{r}}\frac{|u_{\delta,\e}(x)-u_{\delta,\e}(y)|^{2}}{|x-y|^{n+2s}}\,\mathrm{d}x\,\mathrm{d}y+f_\varepsilon\left(\int_{\Gamma}h_\delta(u_{\delta,\varepsilon})\right)\right]-\frac{\theta}{2}\\
	&\leq\liminf_{\sigma,\delta\rightarrow0}\left[\int_{B_r}\int_{B_{r}}\frac{|u_{\sigma,\delta,\e}(x)-u_{\sigma,\delta,\e}(y)|^{2}}{|x-y|^{n+2s}}\,\mathrm{d}x\,\mathrm{d}y+f_\varepsilon\left(\int_{\Gamma}h_\delta(u_{\sigma,\delta,\varepsilon})\right)\right]-\frac{\theta}{2}\\
	&\leq\liminf_{\sigma,\delta\rightarrow0}\left[\int_{B_r}\int_{B_{r}}\frac{|u_{\sigma,\delta,\e}(x)-u_{\sigma,\delta,\e}(y)|^{2}}{|x-y|^{n+2s}}\,\mathrm{d}x\,\mathrm{d}y+\int_\Omega g_\sigma(u_{\sigma,\delta,\varepsilon}-\varphi)\right.\\
	&\quad+\left.f_\varepsilon\left(\int_{\Omega^c\cap B_r}h_\delta(u_{\sigma,\delta,\varepsilon})\right)\right]-\frac{\theta}{2}.
	\end{split}
	\end{equation}
	From \eqref{5.1}-\eqref{5.3}, we obtain
	$$
	J_{\sigma,\delta,\varepsilon}(v)<J_{\sigma,\delta,\varepsilon}(u_{\sigma,\delta,\varepsilon}) -\frac{\theta}{4}<J_{\sigma,\delta,\varepsilon}(u_{\sigma,\delta,\varepsilon}), 
	$$
	which is a contradiction, since $u_{\sigma,\delta,\varepsilon}$ is a minimizer of $J_{\sigma,\delta,\e}$.
\end{proof}  
\begin{corollary}\label{c5.1}
	The Euler-Lagrange equation for $u_\e$ is 
	\[ \begin{cases}
	(-\Delta)^su_\e \ge 0\,\,\text{ in }\,\,\Omega,\\
	(-\Delta)^su_\e=0\,\,\text{ in }\,\,\Omega\cap\{u_\e>\varphi\},\\
	(-\Delta)^su_\e \le 0\,\,\text{ in }\,\,\Omega^c,\\
	(-\Delta)^su_\e=0\,\,\text{ in }\,\,\{u_\e>0\}\setminus\Omega.
	\end{cases} \]
	\end{corollary}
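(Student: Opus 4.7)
The plan is to derive each of the four Euler--Lagrange relations from the local minimality of $u_\varepsilon$ for $J_\varepsilon$ established in Theorem~\ref{t5.1}, by perturbing $u_\varepsilon$ with compactly supported test functions whose supports are chosen so that either the obstacle constraint $u\ge\varphi$ or the volume term $f_\varepsilon(|\{u>0\}\setminus\Omega|)$ becomes locally trivial. Two-sided perturbations on open sets where the constraints are inactive will produce the two equations, while one-sided perturbations will give the two inequalities.

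To obtain the equations I would exploit openness of the noncoincidence sets. On $\Omega\cap\{u_\varepsilon>\varphi\}$, any $\eta\in C_c^\infty(\Omega\cap\{u_\varepsilon>\varphi\})$ admits small two-sided perturbations $u_\varepsilon+t\eta$ that still lie above $\varphi$, and, since $\eta$ vanishes on $\Omega^c$, the volume functional is unaffected; the resulting two-sided variational identity yields $(-\Delta)^\alpha u_\varepsilon=0$ there. On $\{u_\varepsilon>0\}\setminus\Omega$, any $\eta\in C_c^\infty(\{u_\varepsilon>0\}\setminus\Omega)$ keeps $u_\varepsilon+t\eta>0$ on a neighbourhood of $\operatorname{supp}\eta$ for small $|t|$, so $|\{u_\varepsilon+t\eta>0\}\setminus\Omega|$ is locally constant in $t$ and the obstacle constraint $u_\varepsilon+t\eta\ge\varphi=0$ holds automatically outside $\Omega$; again the two-sided variation gives $(-\Delta)^\alpha u_\varepsilon=0$.

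For the first inequality $(-\Delta)^\alpha u_\varepsilon\le 0$ in $\Omega$, I would take $\eta\in C_c^\infty(\Omega)$ with $\eta\ge 0$; admissibility for $t\ge 0$ follows from $u_\varepsilon+t\eta\ge u_\varepsilon\ge\varphi$, the volume term is untouched because $\eta$ is supported in $\Omega$, and the one-sided inequality $\left.\tfrac{d}{dt}J(u_\varepsilon+t\eta)\right|_{t=0^+}\ge 0$, after identifying $J'(u_\varepsilon)[\eta]$ with a multiple of $\langle(-\Delta)^\alpha u_\varepsilon,\eta\rangle$ (with the sign convention used in Remark~\ref{r2.1}), is the desired distributional inequality.

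The main obstacle is the third inequality $(-\Delta)^\alpha u_\varepsilon\ge 0$ in $\Omega^c$: for $\eta\ge 0$ supported in $\Omega^c$, the positivity set $\{u_\varepsilon+t\eta>0\}\setminus\Omega$ can jump by the measure of $\{u_\varepsilon=0,\,\eta>0\}$ as soon as $t>0$, so the $t$-derivative of the $f_\varepsilon$ term is ill-defined and a direct variational argument degenerates. My plan is to bypass this by passing to the limit $\sigma,\delta\to 0$ in the penalised Euler--Lagrange identity~\eqref{4.3} of Corollary~\ref{c4.1}, namely $2(-\Delta)^\alpha u_{\delta,\varepsilon}=f_\varepsilon'(\cdot)\,h_\delta'(u_{\delta,\varepsilon})\chi_{\Omega^c}$ on $\{u_{\delta,\varepsilon}>\varphi\}$, whose right-hand side is non-negative since $f_\varepsilon',h_\delta'\ge 0$. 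Combined with the locally uniform and weak $H^\alpha$ convergences provided by Corollary~\ref{c4.2}, this delivers the one-sided distributional inequality on $\Omega^c$ and completes the list.
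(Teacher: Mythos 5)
The paper states Corollary~\ref{c5.1} without proof, leaving it as an immediate consequence of Remark~\ref{r2.1} and Corollary~\ref{c4.1} after passing to the limit $\sigma,\delta\to 0$; your plan of deriving three of the four relations by fresh one- and two-sided variations of $J_\varepsilon$ is a genuinely different and legitimate route, and it is self-contained once one commits to a sign convention. Your treatment of the two equalities and of the one-sided variation in $\Omega$ is sound; the delicate point you correctly isolate is the inequality on $\Omega^c$, where the direct variation degenerates because the volume term is not one-sided differentiable.

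However, your fix for that inequality has a gap. The identity \eqref{4.3} is asserted only on $\{u_{\delta,\varepsilon}>\varphi\}$, which in $\Omega^c$ is the set $\{u_{\delta,\varepsilon}>0\}\cap\Omega^c$; passing this to the limit in $\delta$ delivers nothing on the complementary set $\{u_\varepsilon=0\}\cap\Omega^c$, which is exactly where the sign of $(-\Delta)^\alpha u_\varepsilon$ is at issue. The missing observation is that, by Proposition~\ref{p2.1}, $u_{\sigma,\delta,\varepsilon}\ge 0=\varphi$ pointwise on $\Omega^c$ for \emph{every} $\sigma,\delta$, hence $g_\sigma'(u_{\sigma,\delta,\varepsilon}-\varphi)\equiv 0$ there and the penalized Euler--Lagrange relation with non-negative right-hand side holds on all of $\Omega^c$, not merely on $\{u>\varphi\}$; only with this remark does the distributional inequality survive the passage to the limit. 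Finally, be aware that you are leaning on ``the sign convention used in Remark~\ref{r2.1}'', but that remark is in tension with the first-variation identity written out in \eqref{3.4} (where $2(-\Delta)^\alpha u + g_\sigma' + f_\varepsilon' h_\delta' \chi_{\Omega^c}=0$, i.e.\ the penalties appear with the opposite sign). Carrying out your one-sided variation with the convention of \eqref{3.4} and the standard identity $\tfrac{d}{dt}J(u+t\eta)\big|_{t=0}=c_{n,\alpha}^{-1}\cdot 4\int\eta\,(-\Delta)^\alpha u$ would reverse both inequalities; so before invoking Remark~\ref{r2.1} to fix your signs you should resolve this internal inconsistency rather than defer to it.
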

The previous theorem puts us in the framework of \cite{TT15}, where the authors analyze properties of minimizers of $J_\e$. Thus, one has the following.
\begin{theorem}\label{t5.2}
For $\varepsilon>0$ small, the function $u_\varepsilon$ from \Cref{c4.2}, solves the problem \eqref{P}. Moreover, $u_\e$ is H\"older continuous with exponent $s$, and that regularity is optimal.
\end{theorem}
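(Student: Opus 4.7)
The strategy splits the proof into three tasks. The first is to verify that $u_\e$ satisfies all conditions defining $\mathbb{K}$ in \eqref{P} except possibly the volume constraint, and this is immediate from earlier material: the ordering $u_\e\ge\varphi$ is preserved through the locally uniform convergence of Corollaries \ref{c3.1} and \ref{c4.2}, while the PDE statements $(-\Delta)^\alpha u_\e\le 0$ in $\Omega$ and $(-\Delta)^\alpha u_\e=0$ in $\{u_\e>0\}\setminus\Omega$ are the first and fourth items of Corollary \ref{c5.1}. This leaves only the volume equality $V_\e:=|\{u_\e>0\}\setminus\Omega|=\gamma$.

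For that equality, I would prove two-sided control. Upper bound: fix once and for all an admissible $v^\sharp\in H^\alpha(\R^n)$ with $v^\sharp\ge\varphi$ and $|\{v^\sharp>0\}\setminus\Omega|=\gamma$ — for instance, $v^\sharp=\max(\varphi,\eta)$ with $\eta\ge 0$ a smooth bump supported in $\Omega^c$ and satisfying $|\{\eta>0\}|=\gamma$. Since $f_\e(\gamma)=0$, Theorem \ref{t5.1} gives $J_\e(u_\e)\le J(v^\sharp)$, and the explicit form of $f_\e$ on $[\gamma,\infty)$ yields $V_\e\le \gamma+\e\, J(v^\sharp)$. Lower bound: invoke \cite{TT15}. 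By Theorem \ref{t5.1}, $u_\e$ minimizes the Alt-Caffarelli-type functional $J_\e$, so $\{u_\e>0\}\setminus\Omega$ enjoys uniform non-degeneracy and positive density near its exterior free boundary, with constants independent of $\e$. A perturbation that enlarges this set slightly inside $\Omega^c$ would increase $J$ by a quantity independent of $\e$ while decreasing $f_\e$ by an amount linear in $\e$; for $\e$ small enough this would strictly decrease $J_\e(u_\e)$, contradicting minimality unless $V_\e\ge \gamma$. Combined with the upper bound, $V_\e=\gamma$ for all small $\e$, and hence $u_\e$ solves \eqref{P}.

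For the optimal regularity, the $C^{0,\alpha}$ bound on $u_\e$ is already provided by Corollary \ref{c4.2} at the now-fixed small $\e$. Optimality then comes from the exterior free boundary behavior: at a point $x_0\in\partial\{u_\e>0\}\setminus\overline{\Omega}$, the function $u_\e$ vanishes and solves $(-\Delta)^\alpha u_\e=0$ in $\{u_\e>0\}\setminus\Omega$, producing the asymptotic $u_\e(x)\simeq \dist(x,\partial\{u_\e>0\})^\alpha$ via the standard fractional barriers used in \cite{TT15}, which rules out any higher H\"older exponent.

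The principal obstacle is the lower bound $V_\e\ge\gamma$: because $f_\e$ actively \emph{rewards} shrinking $V_\e$ below $\gamma$, this inequality cannot be extracted by simple energy comparison. It is powered entirely by the quantitative density and non-degeneracy properties of minimizers of $J_\e$ developed in \cite{TT15}, applied to a carefully chosen volume-enlarging competitor.
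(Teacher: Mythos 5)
The overall outline you propose is the same as the paper's: verify that $u_\e$ satisfies the defining conditions of $\mathbb{K}$ modulo the volume constraint (via Corollaries \ref{c3.1}, \ref{c4.2}, \ref{c5.1}), reduce the theorem to the identity $|\{u_\e>0\}\setminus\Omega|=\gamma$ for small $\e$, and obtain this identity plus optimality from \cite{TT15}. The paper simply cites \cite[Theorem 5.1]{TT15} for the volume identity and \cite[Theorem 2.1]{TT15} for optimality of the $C^{0,\alpha}$ exponent, and does not attempt to reconstruct the arguments. You, however, try to sketch a proof of the volume identity, and both halves of that sketch contain genuine errors.

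For the upper bound, the energy comparison $J_\e(u_\e)\le J(v^\sharp)$ with the admissible competitor $v^\sharp$ of volume $\gamma$ only gives $V_\e\le\gamma+\e J(v^\sharp)$. This shows $V_\e\to\gamma$ as $\e\to0$, but it does \emph{not} give the required exact inequality $V_\e\le\gamma$ for a fixed small $\e$. The correct argument (the one behind \cite[Theorem 5.1]{TT15}, following Aguilera--Alt--Caffarelli) is local: if $V_\e>\gamma$, truncate $u_\e$ on a small ball in $\{u_\e>0\}\setminus\Omega$; the increase in $J$ is controlled by the (uniform-in-$\e$) optimal H\"older bound, order $C\cdot V_{\mathrm{cut}}$, while $f_\e$ drops by $\tfrac1\e V_{\mathrm{cut}}$, contradicting minimality once $\e<1/C$. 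Your global comparison argument is too weak to reach this.

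For the lower bound, your perturbation reasoning is internally inconsistent and also gets the monotonicity of $f_\e$ wrong. $f_\e$ is strictly increasing on all of $\R$ (slope $\e$ below $\gamma$, slope $1/\e$ above), so enlarging the positivity set \emph{never} decreases $f_\e$; it increases it. And even taking your claim at face value — $J$ increases by a quantity independent of $\e$ while $f_\e$ decreases by something of order $\e$ — the net change in $J_\e$ would be positive for small $\e$, hence \emph{no} contradiction with minimality follows. The actual mechanism is again local and different: if $V_\e<\gamma$, the effective Alt--Caffarelli parameter in the free boundary condition is $f'_\e(V_\e)=\e$, whereas the non-degeneracy estimate (uniform in $\e$) forces this parameter to be bounded below by a fixed positive constant, a contradiction for small $\e$. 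Since the paper delegates all of this to \cite{TT15}, the safest route is the paper's: cite \cite[Theorem 5.1]{TT15} directly rather than supplying a sketch that is incorrect in both directions.
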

\begin{proof}
	By \Cref{TT}, when $\e>0$ is small (but fixed), then $|\{u_\varepsilon>0\}\setminus\Omega|=\gamma$. The latter implies that (see \Cref{c5.1}) $u_\e\in\mathbb{K}$ and additionally that $f_\varepsilon(|\{u_\varepsilon>0\}\setminus\Omega|)=0$. Therefore, the function $u_\varepsilon$ solves \eqref{P}. In other words, for $\e>0$ small enough, we have the desired volume, and minimizers of $J_\e$ turn into minimizers of $J$, i.e., solutions of the original problem. The $s$-H\"older regularity of $u_\e$ is observed in \Cref{c4.2}, and it is optimal, \cite[Theorem 2.1]{TT15}.
\end{proof}
\begin{remark}
	Observe that any minimizer of \eqref{P} can be interpreted as a minimizer of $J_{\varepsilon}$, for $\varepsilon > 0$ small enough. Thus, all minimizers of \eqref{P} have the optimal $s$-H\"older regularity.
\end{remark}

\smallskip

\section{Regularity of the free boundaries}
\Cref{t5.2} implies non-degeneracy and positive density results for solutions, \cite[Lemma 2.2]{TT15} and \cite[Theorem 2.3]{TT15} respectively, as stated in the following lemma.
\begin{lemma}\label{t5.3}
	If $u$ is a solution of \eqref{P}, and $x_0\in\partial\{u>0\}\cap\Omega$, then there exists a constant $C>0$ such that 
	$$
	\sup_{B_r(x_0)}u\ge Cr^s,
	$$
	for $0<r<\frac{1}{2}\text{dist}(x_0,\partial\Omega)$. Furthermore, there exists a constant $c>0$ such that 
	$$
	|\{u=0\}\cap B_r(x_0)|\ge cr^n\,\,\,\textrm{ and }\,\,\,|\{u>0\}\cap B_r(x_0)|\ge cr^n.
	$$
\end{lemma}
The regularity of the interior free boundary comes from that of the fractional obstacle problem. 
    \begin{theorem}\label{t6.2}
        If $u$ is a minimizer of \eqref{P}, and $\varphi\in C^{2,1}(\R^n)$, then the interior free boundary $\partial\{u>\varphi\}$ is locally a $C^{1,\tau}$ surface for some $\tau\in(0,1)$.
    \end{theorem}
    \begin{proof}
        Indeed, since $\partial\{u>0\}\subset\Omega^c$, then minimizer $v$ of the fractional energy $J$ over $\{v\ge\varphi\}$ with zero data on $\{u>0\}^c$ is the solution of the fractional obstacle problem in $\{u>0\}$. Moreover, the uniqueness of the solution of the obstacle problem implies that $v\equiv u$. Thus, $u$ is the solution of the fractional obstacle problem, and therefore, as $\varphi\in C^{2,1}(\R^n)$, the free boundary $\partial\{u>\varphi\}$ is locally a $C^{1,\tau}$ surface for some $\tau\in(0,1)$, \cite[Theorem 7.7]{CSS08}.
    \end{proof}
The next result concerns the regularity of the exterior free boundary.
\begin{theorem}\label{t5.5}
	If $u$ is a minimizer of \eqref{P}, then
	\begin{itemize}
		\item $\mathcal{H}^{n-1}(\mathcal{K}\cap\partial\{u>0\}\cap\mathbb{R}^n)<\infty$, for every compact set $\mathcal{K}\subset\Omega$.
		\item The reduced free boundary $\partial^*\{u>0\}\cap\mathbb{R}^n$ is locally a $C^{1,\beta}$ surface.
	\end{itemize}
\end{theorem}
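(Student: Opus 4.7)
The plan is to reduce the statement to the corresponding fractional free boundary regularity result in \cite{TT15}, by using Theorem \ref{t5.2} to place $u$ in exactly that framework. The point of all the work in Sections \ref{s2}--\ref{s5} has been to show that, once the three penalization parameters are sent to zero, the minimizer of the original constrained problem coincides with a local minimizer of the one-parameter functional $J_\varepsilon$. For such minimizers, free boundary regularity has already been established in \cite{TT15}, and the present theorem is meant to be inherited from there.

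First, I would invoke Theorem \ref{t5.2} to note that, for all sufficiently small $\e>0$, the function $u_\e$ from Corollary \ref{c4.2} solves \eqref{P} and attains the prescribed volume $\gamma$, so the penalization $f_\e(|\{u_\e>0\}\setminus\Omega|)$ vanishes. By Theorem \ref{t5.1}, $u_\e$ is a local minimizer of $J_\e$ over functions in $H^\alpha(\R^n)$ lying above $\varphi$; since $\varphi$ is compactly supported in $\Omega$, the obstacle constraint is automatic on $\Omega^c$, and Corollary \ref{c5.1} gives $(-\Delta)^\alpha u_\e = 0$ in $\{u_\e>0\}\setminus\Omega$. Thus, viewed near the exterior free boundary $\partial\{u>0\}$, $u$ is precisely the type of one-phase fractional Alt--Caffarelli minimizer analyzed in \cite{TT15}.

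Second, I would check that the geometric inputs needed to run the Caffarelli-type free boundary analysis of \cite{TT15} are already available in our setting: the $\alpha$-H\"older regularity of $u$ from Theorem \ref{t5.2}, the non-degeneracy estimate $\sup_{B_r(x_0)}u \ge C r^\alpha$ at free boundary points, and the two-sided positive density of $\{u=0\}$ and $\{u>0\}$ from Theorem \ref{t5.3}. With these in hand, \cite[Theorem 3.1]{TT15} applies directly and yields both conclusions: the first bullet follows from a Vitali/Besicovitch covering argument, in which non-degeneracy controls how large $\{u>0\}$ must be in each ball centered at a free boundary point, while positive density of the zero set controls how many such balls can fit in a compact set, yielding the bound on $\H$; the second bullet follows from the fractional flatness-implies-$C^{1,\beta}$ theorem for one-phase Alt--Caffarelli minimizers developed in \cite{TT15}.

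The main obstacle is conceptual rather than technical: one must be sure that the passage to the limit $\sigma,\delta,\e\to 0$ does not destroy the Alt--Caffarelli variational structure required by \cite[Theorem 3.1]{TT15}. This has been taken care of in Theorems \ref{t5.1} and \ref{t5.2}, together with Corollary \ref{c5.1}, which place $u$ exactly in the class of minimizers handled in \cite{TT15}. Consequently the proof of Theorem \ref{t5.5} reduces to a direct invocation of \cite[Theorem 3.1]{TT15} applied to $u = u_\e$ for $\e$ small.
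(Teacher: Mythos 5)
Your proposal is correct and follows essentially the same route as the paper: both reduce the statement to a direct application of \cite[Theorem 3.1]{TT15}, using Theorem \ref{t5.1} (and, in your case, also Theorems \ref{t5.2}--\ref{t5.3} for the supporting geometric estimates) to place the minimizer $u=u_\e$ in the Alt--Caffarelli-type variational framework analyzed there. The paper's proof is just terser, citing only Theorem \ref{t5.1} and \cite[Theorem 3.1]{TT15}, while you additionally sketch the internal mechanism of the cited result; the logical content is the same.
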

\begin{proof}
	This follows from \Cref{t5.1} and \Cref{TTfb}.
\end{proof}
\begin{remark}\label{r5.1}
	As in \cite[Theorem 6.4]{Y16} (see also \cite[Lemma 6.2]{TU17}), the positivity set is well localized in a bounded set, meaning that the optimization is in fact in a big (but bounded) domain rather than the whole space (see \Cref{new proposition}).
\end{remark}

\vspace{0.25cm}

\noindent{\bf Acknowledgments.} DM was partially supported by CNPq-Brazil through grant 311354/2019-0. RT was supported by the King Abdullah University of Science and Technology (KAUST). The authors thank the anonymous referee for insightful comments and constructive suggestions that helped to revise and enhance the quality of the manuscript.

\smallskip

\end{document}